%
\input ./style/arxiv-vmsta.cfg
\documentclass[numbers,compress,v1.0.1]{vmsta}
\usepackage{vtexbibtags}
\usepackage[mathscr]{euscript}
\usepackage{mathrsfs}

\volume{6}
\issue{4}
\pubyear{2019}
\firstpage{479}
\lastpage{494}
\aid{VMSTA144}
\doi{10.15559/19-VMSTA144}
\articletype{research-article}


\DeclareMathOperator*{\argmin}{arg\,min}

\startlocaldefs
\newcommand{\pvtex}{^}
\newcommand{\Xvtex}[1]{X\pvtex{#1}}
\newcommand{\rmd}{\mathrm{d}}
\newcommand{\rme}{\mathrm{e}}
\newcommand{\Lrm}{\mathrm{L}}
\newcommand{\leb}{\lambda}
\newcommand{\Ascr}{\mathscr{A}}
\newcommand{\Bscr}{\mathscr{B}}
\newcommand{\Escr}{\mathscr{E}}
\newcommand{\Fscr}{\mathscr{F}}
\newcommand{\Gscr}{\mathscr{F}^L}
\newcommand{\Hscr}{\mathscr{H}}
\newcommand{\Kscr}{\mathscr{K}}
\newcommand{\Pscr}{\mathscr{P}}
\newcommand{\Sscr}{\mathscr{S}}
\newcommand{\Tscr}{\mathscr{T}}
\newcommand{\Xscr}{\mathscr{X}}
\newcommand{\sig}{\sigma}
\newcommand{\cadlag}{c\`adl\`ag}
\newcommand{\Ebb}{\mathbb{E}}
\newcommand{\Fbb}{\mathbb{F}}
\newcommand{\Nbb}{\mathbb{N}}
\newcommand{\Pbb}{\mathbb{P}}
\newcommand{\Qbb}{\mathbb{Q}}
\newcommand{\Rbb}{\mathbb{R}}
\newcommand{\Gbb}{\mathbb{F}^L}
\newcommand{\aPP}[2]{\ensuremath{\langle #1,#2 \rangle}}

\newtheorem{theorem}{Theorem}[section]
\newtheorem{lemma}[theorem]{Lemma}
\newtheorem{proposition}[theorem]{Proposition}

\theoremstyle{definition}
\newtheorem{definition}[theorem]{Definition}
\newtheorem{assumption}[theorem]{Assumption}
\newtheorem{remark}[theorem]{Remark}

\numberwithin{equation}{section}

\hyphenation{de-si-de-rium}

\urlstyle{rm}
\allowdisplaybreaks
\ifdefined\HCode 
\def\index#1{}
\else 
\fi
\endlocaldefs

\begin{document}

\begin{frontmatter}
\pretitle{Research Article}

\title{BSDEs and log-utility maximization for~L\'{e}vy~processes}

\author[a]{\inits{P.}\fnms{Paolo}~\snm{Di Tella}\ead[label=e1]{Paolo.Di\_Tella@tu-dresden.de}}
\author[b]{\inits{H.-J.}\fnms{Hans-J\"{u}rgen}~\snm{Engelbert}\thanksref{cor1}\ead[label=e2]{hans-juergen.engelbert@uni-jena.de}}
\thankstext[type=corresp,id=cor1]{Corresponding author.}
\address[a]{Institute for Mathematical Stochastics, \institution{TU Dresden},
Zellescher Weg 12--14, 01069~Dresden, \cny{Germany}}
\address[b]{Department of Mathematics, \institution{University of Jena},
Ernst-Abbe-Platz 2, 07743~Jena,~\cny{Germany}}


\markboth{P. Di Tella, H.-J. Engelbert}{BSDEs and log-utility maximization for L\'{e}vy processes}

\begin{abstract}
In this paper we establish the existence and the uniqueness of the
solution of a special class of BSDEs for L\'{e}vy processes in the case
of a Lipschitz generator of sublinear growth. We then study a related
problem of logarithmic utility maximization of the terminal wealth in
the filtration generated by an arbitrary L\'{e}vy process.
\end{abstract}
\begin{keywords}
\kwd{L\'{e}vy processes}
\kwd{predictable representation property}
\kwd{BSDEs}
\kwd{utility maximization}
\end{keywords}
\begin{keywords}[MSC2010]%
\kwd{60H05}
\kwd{50G46}
\kwd{60G51}
\end{keywords}

\received{\sday{15} \smonth{2} \syear{2019}}
\revised{\sday{2} \smonth{10} \syear{2019}}
\accepted{\sday{3} \smonth{10} \syear{2019}}
\publishedonline{\sday{28} \smonth{10} \syear{2019}}

\end{frontmatter}

\section{Introduction}%
\label{sec:intro}

This paper consists of two independent but related parts. In the first
part (Section \ref{sec:ex.un.bsdes}), we consider a class of backward stochastic differential equations (from now on BSDEs)\index{BSDEs} (see
\eqref{eq:bsde} below) in the filtration generated by a L\'{e}vy process\index{L\'{e}vy process}
and, for the case of a Lipschitz generator of sublinear growth, we
establish the existence and uniqueness of the solution in Theorem
\ref{thm:ex.un} below. We stress that the proof of Theorem
\ref{thm:ex.un} relies on the predictable representation\index{predictable representation property} property
(from now on PRP) obtained by Di Tella and Engelbert in
\cite{DTE15} (see also \cite{DTE16}) and recalled in Theorem
\ref{thm:prp.lev} below.\goodbreak

A similar class of BSDEs\index{BSDEs} has been considered by Nualart and
Schoutens in \cite{NS01}. Their approach is based on the PRP of
the orthogonalized \emph{Teugels martingales},\index{orthogonalized Teugels martingales} that was obtained in
Nualart and Schoutens \cite{NS00}. However, the PRP of the family
of Teugels martingales\index{Teugels martingales} requires an exponential moment of the L\'{e}vy
measure outside the origin. On the other hand, for the PRP in
\cite{DTE15} no additional assumptions on the L\'{e}vy process\index{L\'{e}vy process} are
needed. Hence, we are able to consider this class of BSDEs\index{BSDEs} for general
L\'{e}vy processes.\index{L\'{e}vy process} Therefore, Theorem \ref{thm:ex.un} below
generalizes \cite[Theorem 1]{NS01} to arbitrary L\'{e}vy processes\index{L\'{e}vy process}
but with a great variety of systems of martingales in place of Teugels
martingales.\index{Teugels martingales}

The second part of the present paper (Section \ref{sec:log.ut}) is
devoted to the study of a problem of logarithmic utility maximization
of terminal wealth. We solve the problem having in mind the dynamical
approach first introduced by Rouge and El Karoui in \cite{RE00}
and further developed by Hu, Imkeller and M\"{u}ller in
\cite{HIM05} in a Brownian setting, which is based on a martingale
optimality principle\index{martingale optimality principle} constructed via BSDEs.\index{BSDEs} In case of a more general
filtration supporting martingales with jumps, the dynamical approach
based on BSDEs\index{BSDEs} has been followed by Morlais in \cite{M09,M10} and
by Becherer in \cite{Be06} to study the problem of exponential
utility maximization of the terminal wealth.\looseness=1

The logarithmic utility maximization problem considered in this paper
is analogous to the one studied in \cite[Section 4]{HIM05} for a
Brownian filtration: We extend the results of
\cite[Section 4]{HIM05} to L\'{e}vy processes\index{L\'{e}vy process} with both Gaussian part
and jumps.

Because of the special structure of the logarithmic utility, it turns
out that, in the present paper, the martingale optimality principle\index{martingale optimality principle} can
be constructed in a direct and independent way, without using BSDEs.\index{BSDEs}
However, as we shall explain (see Remark \ref{rem:BSDEs} below), this
can be also alternatively done using a BSDE of the form of
\eqref{eq:bsde} below.

In \cite{GK03} (see also references therein) Goll and Kallsen have
studied the problem of the logarithmic utility maximization\index{logarithmic utility maximization} in a very
general context (that, in particular, recovers L\'{e}vy processes\index{L\'{e}vy process}),
combining \emph{duality methods} and semimartingale characteristics
calculus. In \cite{GK03}, the authors assume the convexity of the
set in which the admissible strategies\index{admissible strategies} for the optimization problem take
values. However, since for the dynamical approach this further
assumptions is not needed, the set of the constraints considered in the
present paper will be closed and non-necessarily convex. {Other
references for the logarithmic utility maximization\index{logarithmic utility maximization} by the duality
approach are Goll and Kallsen \cite{GK00} and Kallsen
\cite{K00}. We also recall the work by Civitani\'{c} and Karatzas
\cite{CK92} about utility optimization by the duality approach. }

\textit{General setting of the paper.} Let
$(\Omega ,\Fscr ,\Pbb )$ be a complete probability space and
$\Fbb $ be a filtration satisfying the usual conditions. We fix a finite
time horizon $T>0$. We only consider $\Rbb $-valued stochastic processes
on the time interval $[0,T]$. When we say that a process $X$ is a
martingale we implicitly assume that $X$ is \cadlag . We denote by
$\Pscr $ the $\sigma $-algebra of predictable subsets of $[0,T]
\times \Omega $.

By $\Hscr ^{2}$ we denote the space of square integrable martingales
$X$ on $[0,T]$ such that $X_{0}=0$. The space $(\Hscr ^{2},\|\cdot \|
_{2})$ endowed with the norm specified by $\|X\|_{2}^{2}:=\Ebb [X^{2}
_{T}]$, for $X\in \Hscr ^{2}$, is a Hilbert space. We observe that, by
Doob's inequality, the $\|\cdot \|_{2}$-norm is equivalent to the norm
$\|X\|_{\Hscr ^{2}}^{2}:=\Ebb [\sup _{t\in [0,T]}X^{2}_{t}]$,
$X\in \Hscr ^{2}$. However, $(\Hscr ^{2},\|\cdot \|_{\Hscr ^{2}})$ is
not a Hilbert space.

For $X\in \Hscr ^{2}$, we denote by $\aPP{X}{X}$ the predictable square
variation\index{predictable square variation} process of $X$. We recall that $\aPP{X}{X}$ is the unique
predictable increasing process\index{predictable increasing process} starting at zero such that the process
$X^{2}-\aPP{X}{X}$ is an $\Fbb $-martingale. If $X,Y\in \Hscr ^{2}$,
we define $\aPP{X}{Y}$ by polarization and say that $X$ and $Y$ are
orthogonal if $\aPP{X}{Y}=0$.

For $X\in \Hscr ^{2}$, we introduce $L^{2}(X)$ as the space of
$\Fbb $-predictable processes $H$ such that $\int _{0}^{T}H^{2}_{s}\rmd \aPP{X}{X}
_{s}$ is integrable. For $H\in L^{2}(X)$, we denote by $\int _{0}^{
\cdot }H_{s}\rmd X_{s}$ the \emph{stochastic integral\index{stochastic integral}} of $H$ with
respect to $X$. It holds $\int _{0}^{\cdot }H_{s}\rmd X_{s}\in \Hscr
^{2}$, for every $H$ in $L^{2}(X)$. Furthermore, $\int _{0}^{\cdot }H
_{s}\rmd X_{s}$ is characterized in the following way: A martingale
$Z\in \Hscr ^{2}$ is indistinguishable from $\int _{0}^{\cdot }H_{s}\rmd X
_{s}$ if and only if $\aPP{Z}{Y}=\int _{0}^{\cdot }H_{s}\rmd \aPP{X}{Y}
_{s}$, for every $Y\in \Hscr ^{2}$.

For two semimartingales $X$ and $Y$, we denote by $[X,Y]$ the
covariation of $X$ and $Y$, that is, we define $[X,Y]_{t}:=\aPP{X\pvtex c}{Y\pvtex c}+
\sum _{0\leq s\leq t}\Delta X_{s}\Delta Y_{s}$, $t\in [0,T]$, where
$X^{c}$ and $Y^{c}$ denote the continuous local martingale part of
$X$ and $Y$, respectively.

\section{Martingale representation for L\'{e}vy processes\index{L\'{e}vy process}}%
\label{sec:setting}

A L\'{e}vy process\index{L\'{e}vy process} with respect to $\Fbb $ is an $\Rbb $-valued and
$\Fbb $-adapted stochastically continuous \cadlag \ process $L$ such
that $L_{0}=0$, $(L_{t+s}-L_{t})$ is distributed as $L_{s}$ and it is
independent of $\Fscr _{t}$, for all $s,t\geq 0$. In this case,
we say that $(L,\Fbb )$ is a L\'{e}vy process.\index{L\'{e}vy process} By $\Fbb ^{L}$ we
denote the completion in $\Fscr $ of the filtration generated by
$L$. It is well known that $\Fbb ^{L}$ satisfies the usual conditions
(see \cite{W81}). Clearly, $(L,\Fbb ^{L})$ is again a L\'{e}vy
process.\index{L\'{e}vy process}

Let $(L,\Fbb )$ be a L\'{e}vy process.\index{L\'{e}vy process} We denote by $\mu $ the jump
measure of $L$. Recall that $\mu $ is a homogeneous Poisson random
measure on $[0,T]\times \Rbb $ with respect to $\Fbb $ (see
\cite{JS00}, Definition II.1.20). The $\Fbb $-predictable compensator
of $\mu $ is deterministic and it is given by $\leb \otimes \nu $,
where $\leb $ is the Lebesgue measure on $[0,T]$ and $\nu $ is the
L\'{e}vy measure of $L$, that is, $\nu $ is, in particular, a
$\sigma $-finite measure such that $\nu (\{0\})=0$ and $x\mapsto x
^{2}\wedge 1$ is $\nu $-integrable. We call $\overline{\mu }:=
\mu -\leb \otimes \nu $ the compensated Poisson random measure
associated with $\mu $. By ${B}^\sig  $ we denote the Gaussian part
of $L$, which is an $\Fbb $-Brownian motion such that $\Ebb [({B}
^\sig  _{t})^{2}]=\sigma ^{2} t$, where $\sigma ^{2}\geq 0$. By
$\eta \in \Rbb $ we denote the drift parameter of $L$ and we call
$(\eta ,\sigma ^{2},\nu )$ the characteristic triplet\index{characteristic triplet} of $L$.

A function $G$ on $\widetilde{\Omega }:=[0,T]\times \Omega \times \Rbb $
is said to be predictable if $G$ is
$\Pscr \otimes \Bscr (\Rbb )$-measurable. Let $\mathscr{G}^{2}(\mu )$
denote the linear space of the predictable functions $G$\index{predictable ! function} on
$\widetilde{\Omega }$ such that $ \Ebb [\sum _{s\leq T}G^{2}({s,\omega
},\Delta L_{s}(\omega ))1_{\{\Delta L_{s}(\omega )\neq 0\}}]<+\infty
$.

For $G\in \mathscr{G}^{2}(\mu )$, we denote by $
\int _{[0,\cdot ]\times \Rbb }G(s,y)\overline{\mu }(\rmd s,\rmd y)$ the
stochastic integral\index{stochastic integral} of $G$ with respect to $\overline{\mu }$. Recall
that $\int _{[0,\cdot ]\times \Rbb }G(s,y)\overline{\mu }(\rmd s,\rmd y)$
is defined as the unique purely discontinuous $Z\in \Hscr ^{2}$ with
$ \Delta Z_{t}(\omega )=G({t,\omega },\Delta L_{t}(\omega ))1_{\{
\Delta L_{t}(\omega )\neq 0\}} $ (see \cite{JS00}, II.1.27).

Next, for any $f\in L^{\,2}(\nu )$, we introduce the martingale
$X^{f}\in \Hscr ^{2}$ by
\begin{equation*}
X^{f}_{\,t}:=\int _{[0,t]\times \Rbb }f(y)\overline{\mu }(\rmd s,\rmd y),
\quad t\in [0,T].
\end{equation*}

Let $(L,\Fbb )$ be a L\'{e}vy process\index{L\'{e}vy process} with characteristic triplet\index{characteristic triplet}
$(\eta ,\sigma ^{2},\nu )$ on $[0,T]$. Then, for any $f\in L^{2}(
\nu )$, the martingale $X^{f}$ has the following properties:
\bigskip

\textnormal{(i)} For every $f\in L^{2}(\nu )$, $(X^{f},\Fbb )$ is a
L\'{e}vy process\index{L\'{e}vy process} and
\begin{equation*}
\Ebb [(X^{f}_{\,t})^{\,2}]=t\,\int _\Rbb f^{2} (y)\nu (\rmd y)<+
\infty .
\end{equation*}

\textnormal{(ii)} For every $f,g\in L^{2}(\nu )$, $\langle X^{f},X
^{g} \rangle _{\,t}=t\int _\Rbb f(y)g(y)\,\nu (\rmd y)$.

\textnormal{(iii)} For every $f,g\in L^{2}(\nu )$, $X^{f}$ and
$X^{g}$ are orthogonal martingales if and only if $f,g\in L^{\,2}(
\nu )$ are orthogonal functions in $L^{\,2}(\nu )$.
\bigskip

Let $\Tscr $ be an arbitrary subset of $L^{2}(\nu )$. Then we set
%
\begin{equation}
\label{eq:mart.fam}
\Xscr _\Tscr :=\{{B}^\sig  \}\cup \{X^{f},\ f\in \Tscr \}.
\end{equation}

As usual, $\ell ^{2}$ denotes the Hilbert space of sequences
$a=(a^{n})_{n\geq 1}$ of real numbers for which the norm $ \|a\|
_{\ell ^{2}}^{2}:=\sum _{n=1}^{\infty }(a^{n})^{2} $ is finite.
%
\begin{definition}
\label{def:M2}
We denote by $(M^{2}(\ell ^{2}),\|\cdot \|_{M^{2}(\ell ^{2})})$ the
Hilbert space of $\ell ^{2}$-valued $\Fbb $-predictable processes $V$
such that $\|V\|_{M^{2}(\ell ^{2})}^{2}:=\Ebb [\int _{0}^{T}\|V_{s}\|
_{\ell ^{2}}^{2}\rmd s]<+\infty $.
\end{definition}
Let $\Tscr =\{f_{n},\ n\geq 1\}\subseteq L^{2}(\nu )$ be an orthonormal
basis. We remark that the orthogonal sum $\sum _{n=1}^{\infty }\int
_{0}^{\cdot }V^{n}_{s}\rmd X^{f_{n}}_{s}$ converges in (and, therefore,
belongs to) the Hilbert space $(\Hscr ^{2},\|\cdot \|_{2})$ if and only
if $V=(V^{n})_{n\geq 1}$ belongs to $M^{2}(\ell ^{2})$. In this case we
have the isometry $\|\sum _{n=1}^{\infty }\int _{0}^{\cdot }V^{n}_{s}\rmd X
^{f_{n}}_{s}\|_{2}=\|V\|_{M^{2}(\ell ^{2})}$.

For the next theorem, we refer to \cite[Section 4.2]{DTE15}. It
states that the family $\Xscr _\Tscr $, where $\Tscr \subseteq L
^{2}(\nu )$ is an orthonormal basis, possesses the PRP.
%
\begin{theorem}
\label{thm:prp.lev}
Let $(L,\Fbb ^{L})$ be a L\'{e}vy process\index{L\'{e}vy process} with characteristic triplet\index{characteristic triplet}
$(\eta ,\sigma ^{2},\nu )$ on the probability space $(\Omega ,\Fscr
_{T}^{L},\Pbb )$.\index{probability space} Let $\Tscr :=\{f_{n},\ n\geq 1\}$ be an orthogonal
basis of $L^{2}(\nu )$ and let $\Xscr _\Tscr \subseteq \Hscr ^{2}$ be
the associated family of $\Fbb ^{L}$-martingales defined in
\eqref{eq:mart.fam}. Then $\Xscr _\Tscr $ consists of pairwise
orthogonal martingales and every $\Fbb ^{L}$-square integrable
martingale $N$ has the representation
\begin{equation*}
N=N_{0}+\int _{0}^{\cdot }Z_{s}\rmd {B}^\sig  _{s}+\sum _{n=1}^{
\infty }\int _{0}^{\cdot }V^{n}_{s}\rmd X^{f_{n}}_{s},\quad Z\in L^{2}({B}^\sig  ),\ V^{n}\in L^{2}(X^{f_{n}}),\ n\geq 1,
\end{equation*}
where the spaces $L^{2}({B}^\sig  )$ and $L^{2}(X^{f_{n}})$ are
considered with respect to $\Fbb ^{L}$.
\end{theorem}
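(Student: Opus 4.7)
My strategy is to reduce Theorem~\ref{thm:prp.lev} to the classical Itô representation for square-integrable martingales of a L\'evy process, and then to perform a Parseval-type expansion of the jump integrand along the basis $\Tscr$.

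First I would establish pairwise orthogonality of $\Xscr_\Tscr$: since ${B}^\sig$ is continuous while each $X^{f_n}$ is purely discontinuous, $\aPP{{B}^\sig}{X^{f_n}}=0$; and for $n\neq m$, since $f_n\perp f_m$ in $L^2(\nu)$, property~(iii) stated before \eqref{eq:mart.fam} gives $\aPP{X^{f_n}}{X^{f_m}}=0$. This settles the first assertion and, more importantly, ensures that stochastic integrals with respect to distinct members of $\Xscr_\Tscr$ are orthogonal in $\Hscr^2$. Next I would invoke the well-known Itô representation for $\Fbb^L$-square-integrable martingales (Jacod--Shiryaev, III.4.34), which produces $Z\in L^2({B}^\sig)$ and $W\in\mathscr{G}^{2}(\mu)$ such that
\begin{equation*}
N=N_{0}+\int_{0}^{\cdot} Z_{s}\,\rmd {B}^\sig_{s}+\int_{[0,\cdot]\times\Rbb} W(s,y)\,\overline{\mu}(\rmd s,\rmd y).
\end{equation*}

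The remaining work is to rewrite the purely discontinuous term as an orthogonal sum over $\Tscr$. After normalising the basis (a cost-free reduction), for $\leb\otimes\Pbb$-a.e. $(s,\omega)$ the section $y\mapsto W(s,\omega,y)$ lies in $L^2(\nu)$, so setting $V^{n}(s,\omega):=\int_{\Rbb} W(s,\omega,y)\,f_{n}(y)\,\nu(\rmd y)$, Fubini makes each $V^{n}$ predictable, and Parseval combined with Fubini yields
\begin{equation*}
\|V\|_{M^{2}(\ell^{2})}^{2}=\Ebb\left[\int_{0}^{T}\int_{\Rbb} W^{2}(s,y)\,\nu(\rmd y)\,\rmd s\right]<+\infty,
\end{equation*}
so $V=(V^{n})_{n\geq1}\in M^{2}(\ell^{2})$. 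For the truncation $W^{N}:=\sum_{n=1}^{N} V^{n} f_{n}$, linearity yields $\int W^{N}\,\overline{\mu}=\sum_{n=1}^{N}\int_{0}^{\cdot} V^{n}_{s}\,\rmd X^{f_{n}}_{s}$; Parseval then gives $W^{N}\to W$ in $L^{2}(\leb\otimes\Pbb\otimes\nu)$, which the isometry for the compensated Poisson integral transports to $\Hscr^{2}$-convergence, while Step~1 ensures that the right-hand side also converges as an orthogonal sum in $\Hscr^{2}$ with matching norm. The two $\Hscr^{2}$-limits therefore coincide, giving the claimed representation.

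The technical heart of the argument is the last step: matching the $\Hscr^{2}$-limit of $\sum_{n}\int V^{n}\,\rmd X^{f_{n}}$ with the $\overline{\mu}$-integral of $W$. Pairwise orthogonality is what turns the formal Hilbert-space expansion of $W$ in $L^{2}(\nu)$ into a genuine orthogonal decomposition in $\Hscr^{2}$, converting Parseval's identity on $L^{2}(\nu)$ into the isometry $\|\sum_{n}\int V^{n}\,\rmd X^{f_{n}}\|_{2}=\|V\|_{M^{2}(\ell^{2})}$; without Step~1 the series would have no a priori reason to converge in $\Hscr^{2}$, so the otherwise routine Parseval argument on the fibre space $L^{2}(\nu)$ genuinely relies on the martingale-level orthogonality.
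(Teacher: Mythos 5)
Your proposal is correct, but it takes a genuinely different route from the paper: the paper offers no proof of Theorem~\ref{thm:prp.lev} at all, referring instead to \cite[Section 4.2]{DTE15}, where the PRP is obtained from the theory of compensated-covariation stable families of martingales --- one verifies that the family $\Xscr _\Tscr $ (suitably augmented) is stable under the compensated-covariation operation and that an associated class of elementary martingales is total in $\Hscr ^{2}$, so the representation is built from scratch rather than transformed from a known one. You instead start from the classical weak representation property of a L\'{e}vy process in its own filtration relative to $({B}^\sig ,\overline{\mu })$ (Jacod--Shiryaev III.4.34, or Kunita--Watanabe) and then perform a fibrewise Parseval expansion of the jump integrand along the (normalized) basis, using the $M^{2}(\ell ^{2})$-isometry and the pairwise orthogonality from your Step~1 to match the two $\Hscr ^{2}$-limits; this is close in spirit to how Nualart and Schoutens argue in \cite{NS00} for Teugels martingales, but without the exponential-moment hypothesis their particular basis required, so your argument does cover arbitrary L\'{e}vy processes, consistent with the statement. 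What each approach buys: yours is shorter and rests on standard machinery; the cited approach is self-contained and more general (it treats arbitrary compensated-covariation stable families, and the companion paper \cite{DTE16} upgrades it to the chaotic representation property), which is what underwrites the paper's emphasis on ``a great variety of systems of martingales.'' Two points you should make explicit to be airtight, though neither is a genuine gap: first, the representation theorem delivers the jump integrand $W$ a priori only in the larger class $G_{\mathrm{loc}}(\mu )$, so for $N\in \Hscr ^{2}$ you should note that $\Delta N^{d}_{t}=W(t,\Delta L_{t})1_{\{\Delta L_{t}\neq 0\}}$ (the compensator $\leb \otimes \nu $ is quasi-left continuous, so no correction term $\widehat{W}$ appears) and conclude $W\in \mathscr{G}^{2}(\mu )$, i.e.\ $W\in L^{2}(\leb \otimes \Pbb \otimes \nu )$, via the compensation formula; second, the identity $\int _{[0,\cdot ]\times \Rbb }V^{n}_{s}f_{n}(y)\,\overline{\mu }(\rmd s,\rmd y)=\int _{0}^{\cdot }V^{n}_{s}\,\rmd X^{f_{n}}_{s}$ is not bare linearity but follows from the jump characterization of purely discontinuous square-integrable martingales (\cite[II.1.27]{JS00}), both sides having the jumps $V^{n}_{t}f_{n}(\Delta L_{t})1_{\{\Delta L_{t}\neq 0\}}$.
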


\section{BSDEs for L\'{e}vy processes\index{BSDEs for L\'{e}vy processes}}%
\label{sec:ex.un.bsdes}

Let $(L,\Fbb ^{L})$ be a L\'{e}vy process\index{L\'{e}vy process} with characteristic triplet\index{characteristic triplet}
$(\eta ,\sigma ^{2},\nu )$. We consider the probability space
$(\Omega ,\Fscr ^{L}_{T},\Pbb )$.\index{probability space} Because of Theorem
\ref{thm:prp.lev}, for each orthogonal basis $\Tscr =\{f_{n},\ n
\geq 1\}$ of $L^{2}(\nu )$, it is natural to consider the following
BSDE:
%
\begin{equation}
\label{eq:bsde}
Y_{t}=\xi +\int _{t}^{T}f(s,Y_{s},{Z_{s},V_{s}})\rmd s-\int _{t}^{T}Z
_{s}\rmd {B}^\sig  _{s}-\sum _{n=1}^{\infty }\int _{t}^{T} V^{n}_{s}\rmd X
^{f_{n}}_{s},
\end{equation}
where $ f:{[0,T]\times \Omega }\times \Rbb \times \Rbb \times \ell
^{2}\longrightarrow \Rbb
$ is a given random function, called \emph{the generator} of BSDE
\eqref{eq:bsde}, and $\xi $ is an $\Fscr ^{L}_{T}$-measurable random
variable. We call the pair $(f,\xi )$ the \emph{data} of BSDE
\eqref{eq:bsde}.

In \cite{NS00} BSDE \eqref{eq:bsde} is studied for Teugels
martingales\index{Teugels martingales} under the further assumption of the existence of an
exponential moment for $L$, that is, $\Ebb [\rme ^{\varepsilon |L_{1}|}]<+
\infty $, for an $\varepsilon >0$. We are going to study BSDE
\eqref{eq:bsde} for arbitrary L\'{e}vy processes\index{L\'{e}vy process} $L$ and for a great
variety of families $\Xscr _\Tscr =\{{B}^\sig  \}\cup \{X^{f_{n}},
n\ge 1\}$, 
not restricting the analysis to Teugels martingales.\index{Teugels martingales}

We introduce the space $\Sscr ^{2}$ by
\begin{equation*}
\textstyle
\Sscr ^{2}:=\big \{\Fbb ^{L}\textnormal{-adapted processes}\ Y\ \textnormal{such that}\ \Ebb [\sup _{t\in [0,T]}Y_{t}^{2}]<+\infty
\big \}.
\end{equation*}

\begin{definition}
\label{def:solution.lip}
A triplet $(Y,Z,V)\in \Sscr ^{2}\times L^{2}({B}^\sig  )\times M
^{2}(\ell ^{2})$ satisfying \eqref{eq:bsde} is a solution of BSDE
\eqref{eq:bsde}.
\end{definition}
We now state the assumptions on the generator $f$ of \eqref{eq:bsde}
which we shall need in the proof of Theorem \ref{thm:ex.un} below.
%
\begin{assumption}
\label{ass:gen}
Let the generator $f$ of BSDE \eqref{eq:bsde} fulfil the following
properties:

(i) For $(y,z,a)\in \Rbb \times \Rbb \times \ell ^{2}$, $f(\cdot ,
\cdot ,y,z,a)$ is an $\Fbb ^{L}$-progressively measurable process.

(ii) There exists a constant $K>0$ and a nonnegative
$\Fbb ^{L}$-progressively measurable process $\gamma $ such that
$\Ebb [\int _{0}^{T}\gamma _{s}^{2}\rmd s]<+\infty $ and
\begin{equation*}
|f(t,y,z,a)|\leq \gamma _{t}+K\Big (|y|+|\sigma ||z|+\|a\|_{\ell ^{2}}
\Big ).
\end{equation*}

(iii) There exists a constant $C>0$ such that
\begin{equation*}
\begin{split}
|f(t,y_{1},z_{1},a_{1})-
&f(t,y_{2},z_{2},a_{2})|
\\
&\leq C\Big (|y_{1}-y_{2}|+|\sigma |\,|z_{1}-z_{2}|+\|a_{1}-a_{2}\|_{\ell
^{2}}\Big ).
\end{split}
\end{equation*}
\end{assumption}
A generator $f$ fulfilling the properties (i), (ii) and (iii) in
Assumption \ref{ass:gen}, will be called \emph{admissible}.
%
\begin{theorem}
\label{thm:ex.un}
Let $\xi \in L^{2}(\Omega ,\Fscr ^{L}_{T},\Pbb )$ and let $f$ be an
admissible generator. Then \textnormal{BSDE} \eqref{eq:bsde} with
data $(f,\xi )$ admits a unique solution $(Y,Z,V)$.
\end{theorem}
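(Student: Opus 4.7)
The plan is to establish existence and uniqueness by a Banach fixed-point argument on the space $\Sscr^{2} \times L^{2}(B^\sigma) \times M^{2}(\ell^{2})$, equipped for $\beta > 0$ with the weighted norm
\begin{equation*}
\|(Y,Z,V)\|_{\beta}^{2} := \Ebb\!\left[\int_0^T \rme^{\beta s}\!\left(Y_s^{2} + \sigma^{2}Z_s^{2} + \|V_s\|_{\ell^{2}}^{2}\right)\rmd s\right],
\end{equation*}
a classical device in BSDE theory going back to El Karoui, Peng and Quenez. The idea is to construct a map $\Phi$ on this space, obtained by sending $(y,z,v)$ to the triple $(Y,Z,V)$ determined by the conditional expectation of the data against $\Fbb^{L}$, and then to prove that $\Phi$ is a strict contraction for $\beta$ large enough.

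To define $\Phi$, fix $(y,z,v)$ in the space. Assumption \ref{ass:gen}(ii) together with $\xi \in L^{2}(\Omega,\Fscr_T^L,\Pbb)$ and $\Ebb\!\int_0^T \gamma_s^{2}\,\rmd s < \infty$ ensures that $\xi + \int_0^T f(s,y_s,z_s,v_s)\,\rmd s$ is square-integrable, so the \cadlag{} martingale $M_t := \Ebb[\xi + \int_0^T f(s,y_s,z_s,v_s)\,\rmd s \mid \Fscr_t^L]$ lies in $\Hscr^{2}$. Fixing an orthonormal basis $\Tscr = \{f_n\}_{n \ge 1}$ of $L^{2}(\nu)$, Theorem \ref{thm:prp.lev} applied to $M$ produces a unique pair $(Z,V) \in L^{2}(B^\sigma) \times M^{2}(\ell^{2})$ with
\begin{equation*}
M_t = M_0 + \int_0^t Z_s\, \rmd B^\sigma_s + \sum_{n=1}^\infty \int_0^t V^n_s\, \rmd X^{f_n}_s.
\end{equation*}
Setting $Y_t := \Ebb[\xi + \int_t^T f(s,y_s,z_s,v_s)\,\rmd s \mid \Fscr_t^L]$, which belongs to $\Sscr^{2}$ by Doob's inequality, I put $\Phi(y,z,v) := (Y,Z,V)$; the fixed points of $\Phi$ are precisely the solutions of BSDE \eqref{eq:bsde}.

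For the contraction, fix two inputs $(y^i,z^i,v^i)$, $i = 1,2$, set $(Y^i,Z^i,V^i) := \Phi(y^i,z^i,v^i)$, and denote differences by a $\delta$ prefix, so that $\delta Y_T = 0$ and $(\delta Y,\delta Z,\delta V)$ solves the BSDE driven by $f(\cdot,y^1,z^1,v^1) - f(\cdot,y^2,z^2,v^2)$. Applying It\^o's formula to $\rme^{\beta t}(\delta Y_t)^{2}$, using the pairwise orthogonality and the quadratic variation identities for the family $\Xscr_\Tscr$ recalled before equation \eqref{eq:mart.fam}, and taking expectations, one obtains after discarding a nonnegative boundary term the energy inequality
\begin{equation*}
\Ebb\!\int_0^T\! \rme^{\beta s}\!\left(\beta(\delta Y_s)^{2} + \sigma^{2}(\delta Z_s)^{2} + \|\delta V_s\|_{\ell^{2}}^{2}\right)\rmd s \le 2\,\Ebb\!\int_0^T\! \rme^{\beta s}\, \delta Y_s\,[f(s,y^1_s,z^1_s,v^1_s) - f(s,y^2_s,z^2_s,v^2_s)]\,\rmd s.
\end{equation*}
Bounding the right-hand side via the Lipschitz estimate of Assumption \ref{ass:gen}(iii) and Young's inequality in the standard way, then choosing $\beta$ sufficiently large, yields
\begin{equation*}
\|\Phi(y^1,z^1,v^1) - \Phi(y^2,z^2,v^2)\|_{\beta}^{2} \le \kappa\, \|(y^1,z^1,v^1) - (y^2,z^2,v^2)\|_{\beta}^{2}
\end{equation*}
with $\kappa \in (0,1)$, so Banach's fixed-point theorem delivers a unique fixed point, hence a unique solution of BSDE \eqref{eq:bsde}.

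The main point where the L\'{e}vy setting requires genuine work is the treatment of the infinite orthogonal series $\sum_n \int_0^\cdot V^n_s\,\rmd X^{f_n}_s$: its $\Hscr^{2}$-convergence and the resulting isometry (recalled between Definition \ref{def:M2} and Theorem \ref{thm:prp.lev}) are what guarantee that the predictable quadratic variation of the total martingale part of $\delta Y$ equals $\int_0^\cdot (\sigma^{2}(\delta Z_s)^{2} + \|\delta V_s\|_{\ell^{2}}^{2})\,\rmd s$, so that It\^o's formula reduces the argument to the classical weighted-norm Picard scheme. Uniqueness is built into Banach's theorem, but may equivalently be read off the same energy estimate applied directly to any two hypothetical solutions.
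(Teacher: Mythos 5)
Your proposal is correct and takes essentially the same route as the paper: the map $\Phi$ built from the conditional expectation of $\xi+\int_0^T f(s,y_s,z_s,v_s)\,\rmd s$ together with the PRP of Theorem \ref{thm:prp.lev}, the weighted norm $\|\cdot\|_{\beta}$, an integration-by-parts (It\^o) energy estimate for $\rme^{\beta t}(\delta Y_t)^2$ exploiting the orthogonality $\aPP{X^{f_n}}{X^{f_m}}_t=t\delta_{nm}$, the Lipschitz bound of Assumption \ref{ass:gen}(iii) with Young's inequality, and Banach's fixed-point theorem for $\beta$ large (the paper takes $\beta=8C^2+1$). The only discrepancy worth noting is that the paper runs the contraction on $\Kscr^2=L^2(\leb\otimes\Pbb)\times L^2(B^{\sigma})\times M^2(\ell^2)$, which \emph{is} complete under $\|\cdot\|_{\beta}$, rather than on $\Sscr^2\times L^2(B^{\sigma})\times M^2(\ell^2)$ as you state (the latter is not complete for this norm); this is harmless because, exactly as your Doob's-inequality remark shows, $\Phi$ maps into triples with first component in $\Sscr^2$, so the fixed point automatically lies there.
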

\begin{proof}
We denote by $L^{2}(\leb \otimes \Pbb )$ the space of
$\leb \otimes \Pbb $-square integrable adapted processes on
$[0,T]$. We introduce the space $\Kscr ^{2}:=L^{2}(\leb \otimes \Pbb )
\times L^{2}({B}^\sig  )\times M^{2}(\ell ^{2})$ endowed with the norm
$\|\cdot \|_{\Kscr ^{2}}$ defined by $\|\cdot \|_{\Kscr ^{2}}^{2}=\|
\cdot \|^{2}_{L^{2}(\leb \otimes \Pbb )}+\|\cdot \|^{2}_{L^{2}({B}
^\sig  )}+\|\cdot \|^{2}_{M^{2}(\ell ^{2})}$. It is clear that
$(\Kscr ^{2},\|\cdot \|_{\Kscr ^{2}})$ is a Banach space. We now define
the mapping
\begin{equation*}
\Phi :\Kscr ^{2}\longrightarrow \Kscr ^{2}
\end{equation*}
by setting $(Y,Z,V)=\Phi (R,S,P)$, for $(R,S,P)\in \Kscr ^{2}$, where
%
\begin{align}
\label{eq:def.Y}
Y_{t}
&:=\Ebb \left [\xi +\int _{t}^{T}f(s,R_{s},S_{s},P_{s})\rmd s
\Big |\Fscr ^{L}_{t}\right ]\nonumber
\\
&=\Ebb \left [\xi +\int _{0}^{T}f(s,R_{s},S_{s},P_{s})\rmd s\Big |\Fscr
^{L}_{t}\right ]-\int _{0}^{t}f(s,R_{s},S_{s},P_{s})\rmd s,
\end{align}
and $(Z,V)$ is the unique pair in $L^{2}({B}^\sig  )\times M^{2}(
\ell ^{2})$ such that
%
\begin{align}
\label{eq:def.Z.U}
\xi +\int _{0}^{T}
&f(s,R_{s},S_{s}, P_{s})\rmd s\nonumber
\\
={}&
\Ebb \left [\xi +\int _{0}^{T}f(s,R_{s},S_{s},P_{s})\rmd s\right ]+
\int _{0}^{T}Z_{s}\rmd {B}^\sig  _{s}+\sum _{n=1}^{\infty }\int _{0}
^{T}V^{n}_{s}\rmd X^{f_{n}}_{s}\,.
\end{align}
We observe that from Assumption \ref{ass:gen} (ii) and $\xi \in L
^{2}(\Omega ,\Fscr ^{L}_{T},\Pbb )$, the random variable $\xi +\int
_{0}^{T}f(s,R_{s},S_{s},P_{s})\rmd s$ is square integrable, since
$(R,S,P)\in \Kscr ^{2}$. Hence $Y$ is well defined by
\eqref{eq:def.Y}. Moreover, since $\xi +\int _{0}^{T}f(s,R_{s},S_{s},P
_{s})\rmd s\in L^{2}(\Omega ,\Fscr ^{L}_{T},\Pbb )$, the existence of
the unique pair $(Z,V)\in L^{2}({B}^\sig  )\times M^{2}(\ell ^{2})$
follows by Theorem \ref{thm:prp.lev}. We also observe that $Y\in \Sscr
^{2}$. Indeed, the $\Fbb ^{L}$-martingale $N$ satisfying the identity
$N_{t}=\Ebb \big [\xi +\int _{0}^{T}f(s,R_{s},S_{s},P_{s})\rmd s|\Fscr
^{L}_{t}]$ a.s., $t\in [0,T]$, is square integrable. So, using Doob's
inequality and Assumption \ref{ass:gen} (ii), from \eqref{eq:def.Y} we
also get the estimate
\begin{equation*}
\begin{split}
\Ebb \big [
\textstyle
\sup _{t\in [0,T]}Y_{t}^{2}\big ]
&\leq 2\Ebb \big [
\textstyle
\sup _{t\in [0,T]}N_{t}^{2}\big ]+
\displaystyle
8T\|\gamma \|^{2}_{L^{2}(\leb \otimes \Pbb )}
\\
&+16K^2T\Big (\|R\|_{L^{2}(\leb \otimes \Pbb )}+\|S\|^{2}_{L^{2}({B}^\sig  )}+
\|P\|^{2}_{M^{2}(\ell ^{2})}\Big )<+\infty
\end{split}
\end{equation*}
meaning that $Y\in \Sscr ^{2}$. In particular, we have $Y\in L^{2}(\leb
\otimes \Pbb )$. Finally, we observe that $(Y,Z,V)$ satisfies the
relation
\begin{equation*}
Y_{t}=\xi +\int _{t}^{T}f(s,R_{s},{S_{s},P_{s}})\rmd s-\int _{t}^{T}Z
_{s}\rmd {B}^\sig  _{s}-\sum _{n=1}^{\infty }V^{n}_{s}\rmd X^{f_{n}}
_{s}
\end{equation*}
and hence it satisfies \eqref{eq:bsde} if and only if it is a fixed
point of $\Phi $. We now define a $\beta $-norm $\|\cdot \|_{\beta }$
on $\Kscr ^{2}$ equivalent to $\|\cdot \|_{\Kscr ^{2}}$ with respect to
which $\Phi $ is a strong contraction: For any $(R,S,P)\in \Kscr ^{2}$
we define
\begin{equation*}
\|(R,S,P)\|_{\beta }:={\left (\Ebb \left [\int _{0}^{T}\rme ^{\beta s}
\left (R_{s}^{2}+\sigma ^{2}S^{2}_{s}+\|P_{s}\|_{\ell ^{2}}^{2}\right )\rmd s\right ]\right )
^{1/2}},\quad \beta >0.
\end{equation*}
We introduce the notation $(Y^{\prime },Z^{\prime },V^{\prime }):=
\Phi (R^{\prime },S^{\prime },P^{\prime })$ and
\begin{equation*}
(\overline{Y},\overline{Z},\overline{V}):=(Y-Y^{\prime },Z-Z^{\prime
},V-V^{\prime }),
\qquad
(\overline{R},\overline{S},\overline{P}):=(R-R^{\prime },S-S^{\prime
},P-P^{\prime }).
\end{equation*}
Applying twice integration by parts to $\rme ^{\beta t}\overline{Y}
_{t}^{2}$, because $\overline{Y}_{T}=0$, yields
%
\begin{equation}
\label{eq:it.for.app}
\rme ^{\beta t}\overline{Y}_{t}^{2}=-\beta \int _{t}^{T}\rme ^{\beta s}
\overline{Y}_{s-}^{2}\rmd s-2\int _{t}^{T}\rme ^{\beta s}\overline{Y}
_{s-}\rmd \overline{Y}_{s}-\int _{t}^{T}\rme ^{\beta s}\rmd [
\overline{Y},\overline{Y}]_{s}.
\end{equation}
We now compute $\rmd \overline{Y}_{s}$ and $\rmd [\overline{Y},
\overline{Y}]_{s}$. From \eqref{eq:def.Y} and \eqref{eq:def.Z.U}, for
$s\in (t,T]$, we deduce
%
\begin{equation}
\label{eq:com.dif}
\begin{split}
-\rmd \overline{Y}_{s}
&=\big (f(s,R_{s},S_{s},P_{s})-f(s,R_{s}^{\prime
},S_{s}^{\prime },P_{s}^{\prime })\big )
\rmd s-\overline{Z}_{s}\rmd
{B}^\sig  _{s}-\sum _{n=1}^{\infty }\overline{V}_{s}^{n}\rmd X^{f
_{n}}_{s}.
\end{split}
\end{equation}
Hence,
%
\begin{equation}
\label{eq:com.pb}
\rmd [\overline{Y},\overline{Y}]_{s}=\overline{Z}_{s}^{2}\rmd [{B}
^\sig  ,{B}^\sig  ]_{s}+\sum _{n,m=1}^{\infty }\overline{V}_{s}
^{n}\overline{V}_{s}^{m}\rmd [X^{f_{n}},X^{f_{m}}]_{s}.
\end{equation}
Inserting \eqref{eq:com.dif} and \eqref{eq:com.pb} in
\eqref{eq:it.for.app} gives
%
\begin{align}
\label{eq:it.for.app.pb}
\rme ^{\beta t}\overline{Y}_{t}^{2}=
&-\beta \int _{t}^{T}\rme ^{\beta
s}\overline{Y}_{s-}^{2}\rmd s+2\int _{t}^{T}\rme ^{\beta s}
\overline{Y}_{s-}\big (f(s,R_{s},S_{s},P_{s})-f(s,R_{s}^{\prime },S
_{s}^{\prime },P_{s}^{\prime })\big )\rmd s\nonumber
\\
&-2\int _{t}^{T}\rme ^{\beta s}\overline{Y}_{s-}\overline{Z}_{s}\rmd
{B}^\sig  _{s}-2\sum _{n=1}^{\infty }\int _{t}^{T}\rme ^{\beta s}
\overline{Y}_{s-}\overline{V}_{s}^{n}\rmd X^{f_{n}}_{s}\nonumber
\\
&
-\int _{t}^{T}\rme ^{\beta s}\overline{Z}_{s}^{2}\rmd [{B}^\sig  ,
{B}^\sig  ]_{s}-\sum _{n,m=1}^{\infty }\int _{t}^{T}\rme ^{\beta s}
\overline{V}_{s}^{n}\overline{V}_{s}^{m}\rmd [X^{f_{n}},X^{f_{m}}]_{s}.
\end{align}
Because of $Y,Y^{\prime }\in \Sscr ^{2}$ and $(R,S,P), (R^{\prime },S
^{\prime },P^{\prime })\in \Kscr ^{2}$, from Assumption \ref{ass:gen}
(ii) and the Cauchy--Schwarz inequality, the drift part in
\eqref{eq:it.for.app.pb} is integrable. Furthermore, by
the Kunita--Watanabe inequality, observing that $\aPP{\Xvtex{f_{n}}}{\Xvtex{f_{m}}}
_{t}= t\delta _{nm}$, $t\in [0,T]$, $\delta _{nm}$ denoting the Kronecker
symbol, since $V$ and ${V^{\prime }}$ belong to $M^{2}(\ell ^{2})$, we
get that the process $\sum _{n,m=1}^{\infty }\int _{0}^{\cdot }\rme
^{\beta s}\overline{V}_{s}^{n}\overline{V}_{s}^{m}\rmd [X^{f_{n}},X
^{f_{m}}]_{s}$ is of integrable variation. Let
$\Hscr ^{1}$ denote the space of local martingales $X$ such that $\|X\|_{\Hscr
^{1}}:=\Ebb [\sup _{t\in [0,T]}|X_{t}|]<+\infty $ and $X_{0}=0$. We
notice that $(\Hscr ^{1},\|\cdot \|_{\Hscr ^{1}})$ is a Banach space of
uniformly integrable martingales. The local martingales $\int _{0}^{
\cdot }\rme ^{\beta s}\,\overline{Y}_{s-}\,\overline{Z}_{s}\,\rmd
{B}^\sig  _{s}$ and $\int _{0}^{\cdot }\rme ^{\beta s}\,\overline{Y}
_{s-}\,\overline{V}^{n}_{s}\,\rmd X^{f_{n}}_{s}$ belong to $(\Hscr
^{1},\|\cdot \|_{\Hscr ^{1}})$. Indeed, since $\overline{Z}\in L^{2}(
{B}^\sig  )$, $\overline{V}\in M^{2}(\ell ^{2})$ and $\overline{Y}
\in \Sscr ^{2}$, by the Burkholder--Davis--Gundy inequality (see, e.g.,
\cite[Theorem 2.34]{J79}) and the Cauchy--Schwarz inequality, we see
that the estimates
\begin{align*}
\Ebb \left [\sup _{t\in [0,T]}\bigg |\int _{0}^{t}\rme ^{\beta s}\,
\overline{Y}_{s-}\,\overline{Z}_{s}\,\rmd {B}^\sig  _{s}\bigg |\right ]&<+
\infty \quad \textnormal{and}\\
\Ebb \left [\sup _{t\in [0,T]}
\bigg |\int _{0}^{t}\rme ^{\beta s}\,\overline{Y}_{s-}\,\overline{V}
^{n}_{s}\,\rmd X^{f_{n}}_{s}\bigg |\right ]&<+\infty
\end{align*}
hold. Analogously, since $\overline{V}\in M^{2}(\ell ^{2})$, it follows
that
\begin{equation*}
\begin{split}
\Ebb \bigg [\sup _{t\in [0,T]}\bigg |
&\sum _{n=k+1}^{k+m}\int _{0}^{t}\rme
^{\beta s}\overline{Y}_{s-}\overline{V}_{s}^{n}\rmd X^{f_{n}}_{s}
\bigg |\bigg ]
\\
&\leq c\rme ^{\beta T}\left (\Ebb \bigg [\sup _{t\in [0,T]}{\overline{Y}
^{2}_{t}}\bigg ]\right )^{1/2}\left (\Ebb \bigg [\sum _{n=k+1}^{k+m}\int
_{0}^{T}(\overline{V}_{s}^{n})^{2}\rmd s\bigg ]\right )^{1/2}\longrightarrow
0
\end{split}
\end{equation*}
as $k,m\rightarrow +\infty $, where $c>0$ is the constant coming from
the Burkholder--Davis--Gundy inequality. Therefore, we obtain that the sum
$\sum _{n=1}^{\infty }\int _{0}^{\cdot }\rme ^{\beta s}\overline{Y}_{s-}
\overline{V}_{s}^{n}\rmd X^{f_{n}}_{t}$ converges in $(\Hscr ^{1},\|
\cdot \|_{\Hscr ^{1}})$ and hence it is, in particular, a centered\vadjust{\goodbreak}
uniformly integrable martingale. Taking the expectation in
\eqref{eq:it.for.app.pb} now yields
%
\begin{align}
\label{eq:it.for.app.pb.ex}
\Ebb \bigg [\rme ^{\beta t}\overline{Y}_{t}^{2}\,{+}\,\sigma ^{2}\!&\int _{t}
^{T} \!\rme ^{\beta s}\overline{Z}_{s}^{2}\rmd s\,{+}\,\int _{t}^{T}\!\rme ^{
\beta s}\|\overline{V}\|^{2}_{\ell ^{2}}\rmd s\bigg ]\nonumber
=\\&-\beta \Ebb \bigg [\!\int _{t}^{T}\!\rme ^{\beta s}\overline{Y}_{s-}^{2}\rmd s
\bigg ]\nonumber
\\& \,{+}\,2\Ebb \bigg [\!\int _{t}^{T}\!\rme ^{\beta s}\overline{Y}_{s-}\big (f(s,R
_{s},S_{s},P_{s})\,{-}\,f(s,R_{s}^{\prime },S_{s}^{\prime },P_{s}^{\prime })
\big )\rmd s\bigg ].
\end{align}
We now use the abbreviation
\begin{equation*}
I:=2\Ebb \bigg [\int _{t}^{T}\rme ^{\beta s}\overline{Y}_{s-}\big (f(s,R
_{s},S_{s},P_{s})-f(s,R_{-}^{\prime },S_{s}^{\prime },P_{s}^{\prime })
\big )\rmd s\bigg ].
\end{equation*}
By Assumption \ref{ass:gen} (iii), we get
%
\begin{equation}
\label{eq:est.I}
|I|\leq 2C\Ebb \bigg [\int _{t}^{T}\rme ^{\beta s}\overline{Y}_{s-}
\big (|\overline{R}_{s}|+|\sigma |\,|\overline{S}_{s}|+\|\overline{P}
_{s}\|_{\ell ^{2}}\big )\rmd s\bigg ].
\end{equation}
Since the filtration $\Fbb ^{L}$ is quasi-left
continuous, the identity $\Ebb [Y_{s}^{2}]=\Ebb [Y_{s-}^{2}]$ holds. So, $\Ebb [\int _{t}^{T}\rme ^{\beta s}
\overline{Y}_{s-}^{2}\rmd s]= \Ebb [\int _{t}^{T}\rme ^{\beta s}
\overline{Y}_{s}^{2}\rmd s]$ holds by Fubini's theorem. For $h>0$ and $a,b,c\geq 0$, we have the estimates $ab\le a
^{2}\frac{h}{2}+\frac{b^{2}}{2h}$ and $(a+b+c)^{2}\leq 4(a^{2}+b^{2}+c
^{2})$. Applying these inequalities to \eqref{eq:est.I}, and then
choosing $h=8C$, we get
\begin{equation*}
|I|\leq 8C^{2}\,\Ebb \bigg [\int _{t}^{T}\rme ^{\beta s}\overline{Y}
_{s}^{2}\rmd s\bigg ]+\frac{1}{2}\,\Ebb \bigg [\int _{t}^{T}\rme ^{
\beta s}\big (|\overline{R}_{s}|^{2}+\sigma ^{2}\,|\overline{S}_{s}|^{2}+
\|\overline{P}_{s}\|_{\ell ^{2}}^{2}|^{2}\big )\rmd s\bigg ].
\end{equation*}
Using the latter estimate in \eqref{eq:it.for.app.pb.ex} and then taking
$\beta =8C^{2}+1$, we obtain
\begin{equation*}
\|(Y,Z,V)-(Y^{\prime },Z^{\prime },V^{\prime })\|_{8C^{2}+1}^{2}
\leq \frac{1}{2} \|(R,S,P)-(R^{\prime },S^{\prime },P^{\prime })\|
_{8C^{2}+1}^{2}
\end{equation*}
 which means that $\Phi $ is a strong contraction on $(\Kscr ^{2},\|
\cdot \|_{\beta })$ if $\beta =8C^{2}+1$. Hence, $\Phi $ has a unique
fixed point in $(\Kscr ^{2},\|\cdot \|_{8C^{2}+1})$. The proof of the
theorem is complete.
\end{proof}

\section{Logarithmic utility maximization\index{logarithmic utility maximization}}%
\label{sec:log.ut}

\subsection{The market model}%
\label{subs:ju.case}

Let $(L,\Fbb ^{L})$ be a L\'{e}vy process\index{L\'{e}vy process} with characteristics
$(\eta ,\sigma ^{2},\nu )$. We assume $\sigma ^{2}>0$ and consider the
probability space $(\Omega ,\Fscr ^{L}_{T},\Pbb )$.\index{probability space} We denote by
$\mu $ the jump measure of $L$ and set $\overline{\mu }:=\mu -\leb
\otimes \nu $.

Let $b$ and $\zeta $ be bounded predictable processes.\index{predictable ! process} We assume
furthermore that there exist $\varepsilon _{1}>\varepsilon _{2}>0$ such
that $\varepsilon _{2}\leq \zeta _{t}^{2}(\omega )\leq \varepsilon _{1}$,
for every $(t,\omega )\in [0,T]\times \Omega $. Additionally, let
$\beta $ be a bounded predictable function on $\widetilde{\Omega }$ such
that $\beta ({t,\omega },y)\ge 0$ and $\beta ({t,\omega },y)\leq
\alpha _{t}(\omega )(|y|\wedge 1)$, for all $({t,\omega },y)$ in
$\widetilde{\Omega }$, where $\alpha $ is a bounded nonnegative
$\Fbb ^{L}$-predictable process. By the assumptions on $\beta $, we have
\begin{equation*}
\Ebb \bigg [\sum _{0\leq s\leq T}\beta ^{2}(s,\Delta L_{s})1_{\{\Delta L
_{s}\neq 0\}}\bigg ]\leq \Ebb \bigg [\int _{0}^{T}\int _\Rbb \alpha _{s}
^{2}(y^{2}\wedge 1)\nu (\rmd y)\rmd s\bigg ]<+\infty \,.\vadjust{\goodbreak}
\end{equation*}
Therefore, the process $U=(U_{t})_{t\in [0,T]}$ defined by
%
\begin{equation}
\label{eq:def.U}
U_{t}:=\int _{0}^{t}b_{s}\rmd s+\int _{0}^{t}\zeta _{s}\rmd {B}^\sig
_{s}+\int _{[0,t]\times \Rbb }\beta (s,y)\overline{\mu }(\rmd s,\rmd y),
\quad t\in [0,T]\,,
\end{equation}
is a well-defined semimartingale and we can consider the price process\index{price process}
$S=S_{0}\,\Escr (U)$. Because of the assumptions on $\beta $ and from
the explicit expression of the stochastic exponential (see
\cite[Eq.\ I.4.64]{JS00}), it follows that $S>0$ and $S_{-}>0$.
Furthermore, by the Dol\'{e}ans-Dade equation, for every $t\in[0,T]$, the price process\index{price process}
$S$ satisfies
%
\begin{equation}
\label{eq:pr.pr.ju}
S_{t}=S_{0}+\int _{0}^{t}S_{s-}(b_{s}\rmd s+\zeta _{s}\rmd {B}^\sig
_{s})+\int _{[0,t]\times \Rbb }S_{s-}\beta (s,y)\overline{\mu }(\rmd s,\rmd y).
\end{equation}
Clearly, since $\beta (t,\omega ,y)\ge 0$ by assumption, the price
process $S$\index{price process} can only have positive jumps.

As in \cite{DS94}, the admissible strategies\index{admissible strategies} for the market model,
described by the locally bounded semimartingale $S$, are the predictable
processes\index{predictable ! process} $\pi $ such that the stochastic integral $\int _{0}^{\cdot }
\pi _{s}\rmd S_{s}$\index{stochastic integral} is a well-defined semimartingale and $\int _{0}^{T}
\pi _{s}\rmd S_{s}$ is bounded from below. An admissible strategy\index{admissible strategies}
$\pi $ is an arbitrage opportunity if it holds $\int _{0}^{T}\pi _{s}\rmd S
_{s}\geq 0$ a.s.\ and $\Pbb [{\int _{0}^{T}} \pi _{s}\rmd S_{s}>0]>0$. It
is well known that, in this context, the existence of an equivalent
local martingale measure for $S$ implies the absence of arbitrage
opportunities\index{arbitrage opportunities} (see \cite[Corollary 1.2]{DS94}).

We introduce the bounded predictable process $\theta :=\zeta ^{-1}b$. Let
now $\Qbb $ be the measure defined on $\Gscr _{T}$ by $ \rmd \Qbb :=\Escr (-
{\int _{0}^{\cdot }\theta _{s}/\sigma ^{2}\rmd {B}^\sig  }\big )_{T}\rmd \Pbb
$. By Novikov's condition, $\Qbb $ is a probability measure on
$\Gscr _{T}$ equivalent to $\Pbb $. According to Girsanov's theorem,
$
\widehat{B}^{\sigma }_{t}:={B}^\sig  _{t}+\int _{0}^{t}\theta _{s}\rmd s$,
$t\in [0,T]$, defines a $\Qbb $-Brownian motion with respect to
$\Gbb $. Under $\Qbb $, we consider the process $\widehat{U}$ defined
by
%
\begin{equation}
\label{def:proc.Rhat}
\widehat{U}_{t}:=\int _{0}^{t}\zeta _{s}\rmd \widehat{B}^{\sigma }_{s}+
\int _{[0,t]\times \Rbb }\beta (s,y)\overline{\mu }(\rmd s,\rmd y)\,.
\end{equation}
Therefore, under $\Qbb $, we get $S=S_{0}\,\Escr (\widehat{U})$. We are
now going to show that $S$ is a $\Qbb $-martingale and, hence, that the
market model is free of arbitrage opportunities.\index{arbitrage opportunities}

We denote by BMO$(\Qbb )$ the space of adapted BMO martingales with
respect to $\Qbb $ on $[0,T]$.

\begin{proposition}
\label{prop:ar.free}
Let $\Qbb $ be the equivalent probability measure defined above.

\textnormal{(i)} The $\Qbb $-compensator of the $\Pbb $-Poisson random
measure $\mu $ coincides with $\leb \otimes \nu $. Hence, $\mu $ is also
a $\Qbb $-Poisson random measure relative to $\Gbb $.

\textnormal{(ii)} The process $\widehat{U}$ from
\eqref{def:proc.Rhat} belongs to $\emph{BMO}(\Qbb )$.

\textnormal{(iii)} The price process\index{price process} $S$ is a martingale with respect
to $\Qbb $. In particular, the market model is free of arbitrage
opportunities.\index{arbitrage opportunities}
\end{proposition}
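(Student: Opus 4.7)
The plan is to handle the three parts sequentially, with (i) feeding into (ii) via the preservation of the compensator, and (ii) then feeding into (iii) via a BMO criterion for exponential martingales.

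For part (i), I would appeal to the Girsanov theorem for random measures (e.g.\ Jacod--Shiryaev III.3.17). The density process $Z_{t}:=\Escr\bigl(-\int_{0}^{\cdot}\theta_{s}/\sigma^{2}\rmd{B}^{\sig}_{s}\bigr)_{t}$ is a \emph{continuous} positive $\Pbb$-martingale (Novikov applies because $\theta=\zeta^{-1}b$ is bounded). Since $Z$ has no jumps, the Girsanov kernel for the random measure equals $1$ identically, and the $\Qbb$-compensator of $\mu$ therefore coincides with its $\Pbb$-compensator $\leb\otimes\nu$. Since this compensator is deterministic, $\sigma$-finite, and assigns no mass to $\{0\}$, $\mu$ is a $\Qbb$-Poisson random measure relative to $\Gbb$, and $\overline{\mu}=\mu-\leb\otimes\nu$ is a $\Qbb$-martingale measure.

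For part (ii), using (i) both integrands in \eqref{def:proc.Rhat} give rise to $\Qbb$-local martingales, orthogonal because one is continuous and the other purely discontinuous. Hence
\begin{equation*}
\aPP{\widehat{U}}{\widehat{U}}_{t}=\sigma^{2}\int_{0}^{t}\zeta_{s}^{2}\rmd s+\int_{0}^{t}\!\int_{\Rbb}\beta^{2}(s,y)\,\nu(\rmd y)\rmd s.
\end{equation*}
The assumption $\zeta_{s}^{2}\le\varepsilon_{1}$ controls the first summand by $\sigma^{2}\varepsilon_{1}T$, while the pointwise bound $\beta^{2}(s,y)\le\|\alpha\|_{\infty}^{2}(y^{2}\wedge 1)$ together with $\nu$-integrability of $y^{2}\wedge 1$ controls the second summand by a deterministic constant. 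Thus there exists $K<+\infty$ with $\aPP{\widehat{U}}{\widehat{U}}_{T}\le K$ pointwise, and for every $\Gbb$-stopping time $\tau\le T$ one has $\Ebb_{\Qbb}\bigl[\aPP{\widehat{U}}{\widehat{U}}_{T}-\aPP{\widehat{U}}{\widehat{U}}_{\tau}\mid\Gscr_{\tau}\bigr]\le K$, which is the BMO$(\Qbb)$ condition.

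For part (iii), since $\Delta\widehat{U}_{t}=\beta(t,\Delta L_{t})\ge 0$, the jumps of $\widehat{U}$ are bounded below by $-1$, so $\Escr(\widehat{U})>0$ and $\Escr(\widehat{U})_{-}>0$. Combined with the BMO conclusion of (ii), an extension of Kazamaki's criterion to jump martingales with jumps bounded below by $-1$ yields that $\Escr(\widehat{U})$ is a uniformly integrable $\Qbb$-martingale. Consequently $S=S_{0}\Escr(\widehat{U})$ is a genuine $\Qbb$-martingale, $\Qbb$ is an equivalent martingale measure for $S$, and the absence of arbitrage for admissible strategies follows from \cite[Corollary~1.2]{DS94}. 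The main obstacle is invoking the correct BMO-type criterion in the presence of jumps; for continuous BMO martingales the result is classical, and the hypothesis $\beta\ge 0$ is precisely what allows its jump extension to apply here.
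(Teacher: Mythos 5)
Your proof is correct and follows essentially the same route as the paper's: a continuous density process leaves the compensator of $\mu $ unchanged (the paper cites \cite[Theorem 12.31]{HWY92} where you invoke the Girsanov theorem for random measures), the deterministic bounded angle bracket gives the BMO property, and the nonnegativity of the jumps of $\widehat{U}$ together with BMO yields that $\Escr (\widehat{U})$ is a uniformly integrable $\Qbb $-martingale via \cite[Theorem 2]{IS79}, after which \cite[Corollary 1.2]{DS94} gives absence of arbitrage. One small correction to your step (ii): the conditional bound $\Ebb _{\Qbb }[\aPP{\widehat{U}}{\widehat{U}}_{T}-\aPP{\widehat{U}}{\widehat{U}}_{\tau }\mid \Gscr _{\tau }]\leq K$ is \emph{not} by itself the BMO condition for martingales with jumps --- the angle-bracket criterion requires in addition that the jumps be bounded, i.e.\ one must note that $\Delta \widehat{U}_{t}=\beta (t,\Delta L_{t})1_{\{\Delta L_{t}\neq 0\}}$ is bounded because $\beta $ is bounded (this is exactly \cite[Theorem 10.9(2)]{HWY92}, which the paper invokes after making that observation); in your setting the fix is immediate, but as stated the criterion is false in general.
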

\begin{proof}
To verify (i), we observe that the density process of $\Qbb $ with
respect to $\Pbb $ is a continuous $\Gbb $-martingale. Hence, from
\cite[Theorem 12.31]{HWY92}, we conclude that the
$\Qbb $-compensator of $\mu $ coincides with $\leb \otimes \nu $.
Therefore, $\mu $ is a Poisson random measure relative to $\Gbb $ with
respect to $\Qbb $ (see \cite[Theorem II.4.8]{JS00}) and this
proves (i). We verify (ii). From (i) and the assumptions on
$\beta $, $\widehat{U}\in \Hscr ^{2}(\Qbb )$ holds. Furthermore, since
$\Delta \widehat{U}_{t}(\omega )=\beta (t,\omega ,\Delta L_{t}(\omega
))1_{\{\Delta L_{t}(\omega )\neq 0\}}$, $\beta $ being bounded,
$\widehat{U}$ has bounded jumps. Setting $c_{1}:=\sup _{t\in [0,T]}|
\zeta _{t}|$ and $c_{2}:=\sup _{t\in [0,T]}|\alpha _{t}|$, we have
\begin{equation*}
0<\aPP{\widehat U}{\widehat U}_{t}\leq \left (\sigma ^{2} c_{1}^{2}+c
_{2}^{2}\int _\Rbb (y^{2}\wedge 1)\nu (\rmd y)\right )T=:C(T)
,
\quad t\in [0,T]\,.
\end{equation*}
Hence, $\aPP{\widehat U}{\widehat U}$ is bounded on $[0,T]$. So,
\cite[Theorem 10.9 (2)]{HWY92} yields $\widehat{U}\in
\textnormal{BMO}(\Qbb )$. We now come to (iii). Under $\Qbb $, we have
$S=S_{0}\,\Escr (\widehat{U})$. Since $\Delta \widehat{U}\geq 0$, by
(ii) we can apply \cite[Theorem 2]{IS79}, which yields that $S$ is
a uniformly integrable $\Fbb ^{L}$-martingale under $\Qbb $. The proof
of the proposition is complete.
\end{proof}

\subsection{The optimization problem}

We now study the following optimization problem:
%
\begin{equation}
\label{eq:opt.log.ct}
V(x)=\sup _{\rho \in \Ascr }\Ebb \Big [\log \big (W_{T}^{\rho ,x}\big )
\Big ],\quad x>0\,,
\end{equation}
where $\Ascr $ and $W^{\rho ,x}>0$ (both to be defined) represent the
sets of admissible strategies\index{admissible strategies} and the wealth process\index{wealth process} with initial
capital $x>0$, respectively.

We are going to solve \eqref{eq:opt.log.ct} by constructing a family of
processes $\{R^{\rho ,x},\ \rho \in \Ascr \}$ fulfilling the
\emph{martingale optimality principle\index{martingale optimality principle}} on $[0,T]$ (see
\cite[p. 1697]{HIM05}).

\begin{assumption}[Martingale Optimality Principle]
\label{ass:mar.opt.pr}
Suppose that $x>0$. The family $\{R^{\rho ,x},\ \rho \in \Ascr \}$ is
$\Fbb ^{L}$-adapted and has the following properties:

(1) $R^{\rho ,x}_{T}=\log (W^{\rho ,x}_{T})$ for every
$\rho \in \Ascr $.

(2) $R^{\rho ,x}_{0}\equiv r^{x}$ is a constant not depending on
$\rho $ for every $\rho \in \Ascr $.

(3) $R^{\rho ,x}$ is a supermartingale for every $\rho \in \Ascr $.

(4) There exists $\rho ^{\ast }\in \Ascr $ such that $R^{\rho ^{\ast },x}$
is a martingale.
\end{assumption}
Notice that if the family $\{R^{\rho ,x},\ \rho \in \Ascr \}$ satisfies
Assumption \ref{ass:mar.opt.pr}, then the strategy $\rho ^{\ast }$ in
Assumption \ref{ass:mar.opt.pr} (4) is a solution of
\eqref{eq:opt.log.ct}. Indeed, for any $\rho \in \Ascr $, we get
\begin{equation*}
\Ebb \big [\log (W^{\rho ,x}_{T})]=\Ebb \big [R^{\rho ,x}_{T}\big ]
\leq R^{\rho ,x}_{0}=r^{x}=\Ebb \big [R^{\rho ^{\ast },x}_{T}\big ]=\Ebb
\big [\log (W^{\rho ^{\ast },x}_{T})\big ]\,.
\end{equation*}

We now define the set $\Ascr $ of admissible strategies\index{admissible strategies} and the wealth
process\index{wealth process} $W^{\rho ,x}$. For $\rho \in \Ascr $, we want to consider the
wealth process\index{wealth process} $W^{\rho ,x}$ given by
%
\begin{equation}
\label{eq:stoch.ex.wea.ju}
W^{\rho ,x}=x\,\Escr \left (\int _{0}^{\cdot }\rho _{s}\rmd U_{s}\right )=x+
\int _{0}^{\cdot }W^{\rho ,x}_{s-}\rho _{s}\rmd U_{s},\quad x>0\,,
\end{equation}
where the process $U$ is defined by \eqref{eq:def.U}. Therefore, we
assume that $\rho $ is a predictable process\index{predictable ! process} such that $\int _{0}^{T}
\rho ^{2}_{s}\rmd s<+\infty $ a.s.\ To ensure $W^{\rho ,x}>0$, we
assume $\rho _{t}(\omega )\geq 0$ which is, in particular, a short-sell
constraint on the admissible strategies.\index{admissible strategies}

\begin{definition}[Admissible strategies\index{admissible strategies}]
\label{def:adm.st.ju.co}
Let $C\neq \emptyset $ be a closed subset of $[0,+\infty )$. The set
$\Ascr $ of admissible strategies\index{admissible strategies} consists of all predictable and
$C$-valued processes $\rho $ satisfying the integrability condition
$\Ebb [\int _{0}^{T}\rho _{s}^{2}\rmd s]<+\infty $.
\end{definition}

In the following proposition, we summarize some properties of the
process $W^{\rho ,x}$ for $\rho \in \Ascr $.
%
\begin{proposition}
\label{prop:prop.weal.ju}
Let $\rho \in \Ascr $ (see Definition \ref{def:adm.st.ju.co}), $x>0$,
and $\beta (\omega ,t,y)\ge 0$, for every $(\omega ,t,y)\in
\widetilde{\Omega }$. The wealth process\index{wealth process} $W^{\rho ,x}$ is the
$\Gbb $-semimartingale given by the identity
\eqref{eq:stoch.ex.wea.ju}. Furthermore, for every $t\in [0,T]$, we have
$W^{\rho ,x}_{t}>0$ and
%
\begin{align}
\label{eq:smart.dec.wea.ju}
\log \big (W_{t}^{\rho ,x}\big )-\log x
&=\int _{0}^{t}\rho _{s}\zeta _{s}\rmd
{B}^\sig  _{s}
+\int _{[0,t]\times \Rbb }\log \big (1+\rho _{s}\beta (s,y)
\big )\overline{\mu }(\rmd s,\rmd y)\nonumber
\\
&+\int _{[0,t]\times \Rbb }
\big (\log (1+\rho _{s}\beta (s,y))-\rho _{s}
\beta (s,y)\big )\nu (\rmd y)\rmd s\nonumber
\\
&-\int _{0}^{t}\Big [\frac{\sigma ^{2}}{2}\Big (\rho _{s}\zeta _{s}-\frac{
\theta _{s}}{\sigma ^{2}}\Big )^{2}+\frac{\theta _{s}^{2}}{2\sigma ^{2}}
\Big ]\rmd s\,.
\end{align}
The local martingale part of $\log (W^{\rho ,x})$ is a true martingale
and $\log (W_{t}^{\rho ,x})$ is integrable, $t\in [0,T]$.
\end{proposition}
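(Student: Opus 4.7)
The plan is to proceed in three steps: first, construct $W^{\rho,x}$ and verify positivity; second, derive the logarithmic decomposition via the Dol\'eans--Dade formula, compensating the jump sum against $\leb\otimes\nu$; third, establish integrability and the true-martingale property of the local-martingale components.

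\emph{Existence and positivity.} Since $b$, $\zeta$ and $\beta$ are bounded and $\rho$ is predictable with $\Ebb\big[\int_{0}^{T}\rho_{s}^{2}\rmd s\big]<+\infty$, the stochastic integral $X:=\int_{0}^{\cdot}\rho_{s}\rmd U_{s}$ is a well-defined semimartingale, so $\Escr(X)$ exists and $W^{\rho,x}=x\,\Escr(X)$ satisfies \eqref{eq:stoch.ex.wea.ju}. Positivity follows from the explicit product formula for the stochastic exponential: under the short-sell constraint $\rho\geq 0$ together with the standing assumption $\beta\geq 0$, the jumps satisfy $\Delta X_{t}=\rho_{t}\beta(t,\Delta L_{t})1_{\{\Delta L_{t}\neq 0\}}\geq 0$, so every factor $1+\Delta X$ is strictly positive.

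\emph{Logarithmic decomposition.} Next I would apply the standard identity
\[
\log \Escr(X) = X - \tfrac{1}{2}\aPP{X^{c}}{X^{c}} + \sum_{s\leq \cdot}\big(\log(1+\Delta X_{s}) - \Delta X_{s}\big).
\]
The continuous bracket is $\sigma^{2}\int_{0}^{\cdot}\rho_{s}^{2}\zeta_{s}^{2}\rmd s$, while the jump sum rewrites as $\int_{[0,\cdot]\times\Rbb}\big(\log(1+\rho_{s}\beta(s,y))-\rho_{s}\beta(s,y)\big)\mu(\rmd s,\rmd y)$. Splitting this integral and the $\rho_{s}\beta(s,y)$ contribution inherited from $X$ into their $\overline{\mu}$-martingale and $\leb\otimes\nu$-compensator parts causes the two $\rho_{s}\beta(s,y)\,\overline{\mu}$-contributions to cancel. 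Substituting $b_{s}=\zeta_{s}\theta_{s}$ in the remaining Lebesgue drift and rearranging in $\rho_{s}\zeta_{s}$ produces the four terms on the right-hand side of \eqref{eq:smart.dec.wea.ju}.

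\emph{True-martingale property and integrability.} The Brownian local-martingale term $\int_{0}^{\cdot}\rho_{s}\zeta_{s}\rmd B^{\sigma}_{s}$ has predictable quadratic variation $\sigma^{2}\int_{0}^{T}\rho_{s}^{2}\zeta_{s}^{2}\rmd s$ with finite expectation because $\zeta$ is bounded and $\Ebb\big[\int_{0}^{T}\rho_{s}^{2}\rmd s\big]<+\infty$. The main technical point is the jump local martingale $\int_{[0,\cdot]\times\Rbb}\log(1+\rho_{s}\beta(s,y))\,\overline{\mu}(\rmd s,\rmd y)$: using $0\leq\log(1+u)\leq u$ for $u\geq 0$ together with $\beta(s,y)\leq \alpha_{s}(|y|\wedge 1)$ and boundedness of $\alpha$, I obtain $\log^{2}(1+\rho_{s}\beta)\leq \rho_{s}^{2}\alpha_{s}^{2}(y^{2}\wedge 1)$, which is $\leb\otimes\Pbb\otimes\nu$-integrable. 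Hence both local-martingale parts lie in $\Hscr^{2}$ and are true martingales. For the compensator drift I would apply the elementary estimate $0\leq u-\log(1+u)\leq u^{2}/2$ with $u=\rho_{s}\beta(s,y)$; combined with boundedness of $\theta$, $\zeta$ and $\alpha$, this yields $\Ebb\big[|\log W_{t}^{\rho,x}|\big]<+\infty$ for every $t\in[0,T]$, completing the proof.
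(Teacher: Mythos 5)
Your proposal is correct and follows essentially the same route as the paper: the explicit formula for $\log\Escr(X)$, compensation of the jump sum against $\leb\otimes\nu$ so that the two $\rho_{s}\beta(s,y)\,\overline{\mu}$-contributions merge into $\int\log(1+\rho_{s}\beta(s,y))\,\overline{\mu}(\rmd s,\rmd y)$, and the estimates $\log(1+u)\leq u$ and $u-\log(1+u)\leq u^{2}$ (you use $u^{2}/2$, which is equally valid) to place the local martingale parts in $\Hscr^{2}$ and bound the drift. The only cosmetic difference is that the paper verifies the integrable variation of the jump sum \emph{before} splitting it (via \cite[Proposition II.1.28]{JS00}), whereas you supply the same estimate afterwards, which is harmless since it is exactly what legitimizes the compensation step.
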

\begin{proof}
It is clear that for every $\rho \in \Ascr $, the wealth process\index{wealth process}
$W^{\rho ,x}$ is a semimartingale and it satisfies
\eqref{eq:stoch.ex.wea.ju}. Hence, since $\beta ({t,\omega },y)\ge 0$,
we obtain $W^{\rho ,x}_{t}(\omega )>0$, for every $\rho \in \Ascr $.
Thus, we can consider the process $\log (W^{\rho ,x})$. From
\eqref{eq:stoch.ex.wea.ju}, \eqref{eq:def.U} and the explicit expression
of the stochastic exponential, it follows
%
\begin{align}
\label{eq:rep.X.aux}
\log \big (W_{t}^{\rho ,x}&\big )-\log x
=\int _{0}^{t}\rho _{s}\zeta _{s}\rmd
{B}^\sig  _{s}
+\int _{[0,t]\times \Rbb }\rho _{s}\beta (s,y)\overline{
\mu }(\rmd s,\rmd y)\nonumber
\\
&
+\int _{0}^{t}\Big [-\frac{\sigma ^{2}}{2}\Big (\rho _{s}\zeta _{s}-\frac{
\theta _{s}}{\sigma ^{2}}\Big )^{2}+\frac{\theta _{s}^{2}}{2\sigma ^{2}}
\Big ]\rmd s\nonumber
\\
&
+\sum _{0\leq s\leq t}\big \{\log (1+\rho _{s}\beta (s,\Delta L_{s}))-
\rho _{s}\beta (s,\Delta L_{s})\big \}1_{\{\Delta L_{s}\neq 0\}}.
\end{align}
We define the processes
\begin{align*}
&A:=\sum _{0\leq s\leq \cdot }\big \{\log (1+\rho _{s}\beta (s,\Delta L
_{s}))-\rho _{s}\beta (s,\Delta L_{s})\big \}1_{\{\Delta L_{s}\neq 0\}},
\\
& B:=\sum _{0\leq s\leq \cdot }\big |\log (1+\rho _{s}\beta (s,\Delta L
_{s}))-\rho _{s}\beta (s,\Delta L_{s})\big |1_{\{\Delta L_{s}\neq 0\}}.
\end{align*}
The increasing process $B$ is integrable. To see this, we first recall
the following estimates:
%
\begin{equation}
\label{eq:int.cond.log.com}
|\log (1+y)|\leq |y|,
\qquad
|\log (1+y)-y|\leq y^{2},
\quad
\textnormal{for }\ y\geq 0\,.
\end{equation}
From \eqref{eq:int.cond.log.com}, since $\rho \in \Ascr $ and
$\rho _{t}(\omega )\beta (\omega ,t,y)\geq 0$, by the boundedness of
$\alpha $ and the assumptions on $\beta $, we get the estimate
%
\begin{equation}
\label{eq:int.drift}
\Ebb [B_{T}]\leq \Ebb \bigg [\int _{0}^{T}\rho _{s}^{2}\alpha _{s}^{2}\rmd s
\bigg ]\int _\Rbb (y^{2}\wedge 1)\nu (\rmd y)<+\infty \,.
\end{equation}
So, we can introduce $\int _{[0,\cdot ]\times \Rbb } \big (\log (1+\rho
_{s}\beta (s,y))-\rho _{s}\beta (s,y)\big )\mu (\rmd s,\rmd y)$ which is
a process of integrable variation and indistinguishable from $A$. Hence,
the predictable compensator\index{predictable ! compensator} of $A$ is $A^{p}:=
\int _{[0,\cdot ]\times \Rbb } \big (\log (1+\rho _{s}\beta (s,y))-\rho
_{s}\beta (s,y)\big )\nu (\rmd y)\rmd s$ and, according to
\cite[Proposition II.1.28]{JS00}, the identity
\begin{equation*}
A-A^{p}=\int _{[0,\cdot ]\times \Rbb }
\big (\log (1+\rho _{s}\beta (s,y))-
\rho _{s}\beta (s,y)\big )\overline{\mu }(\rmd s,\rmd y)
\end{equation*}
holds. In conclusion, by the linearity of the stochastic integral\index{stochastic integral} with
respect to $\overline{\mu }$, we can rewrite \eqref{eq:rep.X.aux} as in
\eqref{eq:smart.dec.wea.ju}. It remains to show that $\log (W^{\rho ,x}
_{t})$ is integrable for every $t\in [0,T]$. We observe that, because
of \eqref{eq:int.drift}, the boundedness of $\theta $ and $\zeta $,
since $\rho \in \Ascr $, the drift part in
\eqref{eq:smart.dec.wea.ju} is integrable. Furthermore, the local
martingale part of $\log (W^{\rho ,x})$ belongs to $\Hscr ^{2}$. Indeed,
by the boundedness of $\zeta $, we get that ${\int _{0}^{\cdot }\rho
_{s}\zeta _{s}\,\rmd {B}^\sig  _{s}}$ belongs to $\Hscr ^{2}$. From
\eqref{eq:int.cond.log.com} we have
\begin{equation*}
\log (1+\rho _{t}\beta (t,y))\leq \rho _{t}\beta (t,y)\leq \rho _{t}
\alpha _{t}(|y|\wedge 1)\in L^{2}(\leb \otimes \Pbb \otimes \nu )\,.
\end{equation*}
Thus, the stochastic integral\index{stochastic integral} with respect to $\overline{\mu }$ in
\eqref{eq:smart.dec.wea.ju} also belongs to $\Hscr ^{2}$. The proof
is complete.
\end{proof}
We notice that, for every $\rho \in \Ascr $, we have the identity
%
\begin{equation}
\label{eq:weal.gen}
W^{\rho ,x}_{t}=x+\int _{0}^{t}
\textstyle
{\frac{W^{\rho ,x}_{s-}\rho _{s}}{S_{s-}}}
\displaystyle
\,\rmd S_{s},\quad x>0\,,
\end{equation}
where $S$ is the price process\index{price process} of the stock. So, we can interpret an
admissible strategy\index{admissible strategies} $\rho \in \Ascr $ as the part of the wealth invested
in the stock, and  $\pi :={W^{\rho ,x}_{-}\rho }/{S_{-}}$
is the number of shares of the stock. Since from Proposition
\ref{prop:prop.weal.ju} the wealth process\index{wealth process} $W^{\rho ,x}$ is a positive
semimartingale, for every $\rho \in \Ascr $, the predictable process\index{predictable ! process}
$\pi $ is an admissible strategy\index{admissible strategies} for the market model described by the
price process $S$\index{price process} (see \eqref{eq:pr.pr.ju}).

We now state a measurable selection result, which will be useful in the
proof of Theorem \ref{thm:log.ut.max.con.ju} below.

\begin{lemma}
\label{lem:fil.im.fct}
Let $C\subseteq \Rbb $ be a closed subset and let $G:{[0,T]\times
\Omega }\times C\longrightarrow \Rbb $ be a mapping such that:

\textnormal{(i)} $c\mapsto G({t,\omega },c)$ is continuous over $C$ for
every $({t,\omega })\in {[0,T]\times \Omega }$.

\textnormal{(ii)} $({t,\omega })\mapsto G({t,\omega },c)$ is an
$\Gbb $-predictable process for every $c\in C$.

\textnormal{(iii)} For all $({t,\omega })\in {[0,T]\times \Omega }$,
there exists $c^{\ast }\in C$ such that
\begin{equation*}
G({t,\omega },c^{\ast })=\inf _{c\in C} G({t,\omega },c).
\end{equation*}

Then, the infimum is, in fact, the minimum, $({t,\omega })\mapsto
\min _{c\in C} G({t,\omega },c)$ is a predictable process\index{predictable ! process} and there
exists a predictable process\index{predictable ! process} $\rho ^{\ast }$ such that
\begin{equation*}
G({t,\omega },\rho ^{\ast }_{t}(\omega ))=\min _{c\in C} G({t,\omega },c),
\quad
\textit{ for every } ({t,\omega })\in {[0,T]\times \Omega }.
\end{equation*}
\end{lemma}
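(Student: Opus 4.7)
The statement is a classical measurable selection lemma in the predictable category. I would first show that the pointwise minimum is predictable and then produce a predictable selector using a standard selection theorem.

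For the first step, since $C\subseteq\Rbb$ is closed it is separable, so I fix a countable dense subset $\{c_n\}_{n\geq 1}$ of $C$. By continuity of $G({t,\omega},\cdot)$ (hypothesis (i)), for every $(t,\omega)$:
\[
g(t,\omega):=\inf_{c\in C}G(t,\omega,c)=\inf_{n\geq 1}G(t,\omega,c_n).
\]
Each $G(\cdot,\cdot,c_n)$ is $\Gbb$-predictable by (ii), so $g$ is a countable infimum of predictable processes, hence $\Gbb$-predictable. Assumption (iii) guarantees that the infimum is attained pointwise, so $g=\min_{c\in C}G(\cdot,\cdot,c)$.

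For the selector, consider the multifunction $F(t,\omega):=\{c\in C:G(t,\omega,c)=g(t,\omega)\}$, which is nonempty by (iii) and closed in $C$ by continuity of $G({t,\omega},\cdot)$. A Carath\'{e}odory-type argument---approximating $G$ in the $c$-variable by $\Bscr(C)$-step functions with nodes in $\{c_n\}$ and invoking continuity to pass to the pointwise limit---shows that $G$ is jointly $\Pscr\otimes\Bscr(C)$-measurable. Together with predictability of $g$, this implies that the graph $\mathrm{gr}(F)=\{(t,\omega,c):G(t,\omega,c)=g(t,\omega)\}$ lies in $\Pscr\otimes\Bscr(C)$. Since $C$ is a Polish space and $F$ has closed nonempty values with measurable graph, the Kuratowski--Ryll-Nardzewski measurable selection theorem produces a $\Gbb$-predictable process $\rho^{\ast}$ with $\rho^{\ast}_{t}(\omega)\in F(t,\omega)$ for every $({t,\omega})$, i.e.\ $G({t,\omega},\rho^{\ast}_{t}(\omega))=\min_{c\in C}G({t,\omega},c)$.

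The only nonroutine step is the passage from separate measurability (in $(t,\omega)$) and continuity (in $c$) to joint $\Pscr\otimes\Bscr(C)$-measurability of $G$. This is the classical Carath\'{e}odory lemma, handled by the explicit step-function approximation above; once joint measurability is established, predictability of $g$ and the selection theorem deliver the conclusion directly. An alternative, completely elementary route---particularly clean in the paper's intended application, where $C\subseteq[0,+\infty)$---is to take $\rho^{\ast}(t,\omega):=\min F(t,\omega)$, whose existence is guaranteed by closedness and lower boundedness of $F$, and to verify predictability of the sublevel sets $\{\rho^{\ast}\le a\}$ directly via the countable dense family $\{c_n\}$.
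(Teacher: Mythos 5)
Your first step (predictability of the pointwise minimum via a countable dense subset of $C$) is essentially the paper's argument, and in fact more careful: the paper reduces the infimum to $C\cap \mathbf{Q}$, where $\mathbf{Q}$ is the set of rationals, but $C\cap \mathbf{Q}$ need not be dense in $C$ (it is empty for $C=\{\sqrt{2}\}$), whereas your dense subset $\{c_n\}$ is the correct object. For the selector the two proofs genuinely diverge: the paper disposes of this step with a single citation of Filippov's implicit function theorem in the form of Cohen and Elliott (applied with the target function $h=\min_{c\in C}G(\cdot,\cdot,c)$, which is predictable by the first step), while you construct the argmin correspondence $F(t,\omega)=\{c\in C: G(t,\omega,c)=g(t,\omega)\}$ and argue for a measurable selection directly, first upgrading $G$ to joint $\Pscr\otimes\Bscr(C)$-measurability by a Carath\'eodory approximation. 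The paper's route is shorter; yours is self-contained modulo the selection theorem invoked.

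However, the specific appeal to Kuratowski--Ryll-Nardzewski is misapplied: that theorem requires \emph{weak measurability} of $F$ (measurability of $\{(t,\omega): F(t,\omega)\cap U\neq\emptyset\}$ for open $U\subseteq C$), not merely a $\Pscr\otimes\Bscr(C)$-measurable graph. Graph measurability alone yields, via the von Neumann--Aumann type selection theorems, only a universally measurable selector; since the predictable $\sigma$-algebra is not complete, this does not produce a genuinely $\Gbb$-predictable $\rho^{\ast}$ with the identity holding for \emph{every} $(t,\omega)$, which is what the lemma asserts. Weak measurability is also not automatic from your data: $\{F\cap U\neq\emptyset\}$ is not the same as $\{\inf_{c\in C\cap U}G(t,\omega,c)=g(t,\omega)\}$, because the infimum over $C\cap U$ may be attained only on the boundary of $U$ (take $G(c)=|c|$, $U=(0,1)$, $C=\Rbb$). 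Your elementary fallback closes this gap in the case the paper needs: for $C\subseteq[0,+\infty)$, set $\rho^{\ast}:=\min F$, which exists since $F$ is closed and bounded below, and note that $\{\rho^{\ast}\le a\}=\{\inf_{c\in C\cap[0,a]}G(\cdot,\cdot,c)=g\}$ because $C\cap[0,a]$ is compact, so the infimum there is attained; this set is predictable by the dense-subset argument, and rational $a$ suffice. For general closed $C\subseteq\Rbb$, where $\min F$ may fail to exist, the same scheme works with $\rho^{\ast}_t(\omega)$ the point of the closed set $F(t,\omega)$ nearest the origin (nonnegative in case of a tie), which is essentially the standard proof of the selection theorem for closed subsets of the line. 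So your conclusion is correct, but as written the KRN citation should be replaced by this direct argument, by a result tailored to argmins of Carath\'eodory maps (measurability of the argmin of a normal integrand), or by Filippov's theorem as the paper does.
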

\begin{proof}
Clearly, by (iii), the infimum is the minimum and $({t,\omega })
\mapsto \min _{c\in C} G({t,\omega },c)$ is a predictable process.\index{predictable ! process}
Indeed, denoting by $\mathbf{Q}$ the \emph{set of rational numbers},
from the continuity of $G$, $\inf _{c\in C} G({t,\omega },c)=
\inf _{c\in C\cap \mathbf{Q}} G({t,\omega },c)$,\vadjust{\goodbreak} for $({t,\omega })
\in {[0,T]\times \Omega }$, and the second claim is proven. The third
claim follows from assumption (iii) and Filippov's implicit function
theorem as formulated in \cite[Theorem 21.3.4]{CE15} (with $U=C$ and
without the space $X$). The proof of the lemma is complete.
\end{proof}

We are now ready to solve \eqref{eq:opt.log.ct}.
%
\begin{theorem}
\label{thm:log.ut.max.con.ju}
Let $S$ be the price process\index{price process} given in \eqref{eq:pr.pr.ju} with the
additional assumption $\beta ({t,\omega },y)\ge 0$, $({t,\omega },y)
\in \widetilde{\Omega }$. Let $\Ascr $ be as in Definition
\ref{def:adm.st.ju.co}. Then, for every $\rho \in \Ascr $, the wealth
process\index{wealth process} $W^{\rho ,x}$ satisfies \eqref{eq:stoch.ex.wea.ju} and
$\log \big (W^{\rho ,x}_{T}\big )\in L^{1}(\Fscr _{T}^{L},\Pbb )$.
Furthermore, for $ x>0$, the explicit expression of the value function
$V$ of the optimization problem \eqref{eq:opt.log.ct} is $V(x)=\log (x)+\Ebb [
\int _{0}^{T}f(s)\rmd s]$, where $f$ given by
%
\begin{align}
\label{eq:ass.gen.ju}
f({t,\omega })&:=-\min _{c\in C}
\bigg (\frac{\sigma ^{2}}{2}\Big (c\zeta
_{t}(\omega )-\frac{\theta _{t}(\omega )}{\sigma ^{2}}\Big )^{2}\nonumber
\\
&\quad\quad+\int _\Rbb \big \{c\beta ({t,\omega },y)-\log (1+c\beta ({t,\omega
},y))\big \}\nu (\rmd y)\bigg )+\frac{\theta ^{2}_{t}(\omega )}{2\sigma
^{2}}\,.
\end{align}
Moreover, there exists an admissible strategy\index{admissible strategies} $\rho ^{\ast }\in \Ascr $
such that, for every $({t,\omega })$ in $[0,T]\times \Omega $,
%
\begin{align}
\label{eq:op.str.ju}
\rho _{t}^{\ast }(\omega )\in \argmin_{c\in C}\bigg (
&
\frac{\sigma ^{2}}{2}\Big (c\,\zeta _{t}(\omega )-\frac{\theta _{t}(
\omega )}{\sigma ^{2}}\Big )^{2}\nonumber
\\
&+\int _\Rbb \big \{c\beta ({t,\omega },x)-\log (1+c\beta (t,{\omega
},x))\big \}\nu (\rmd x)\bigg )
\end{align}
holds and $\rho ^{\ast }$ is optimal, that is, $V(x)=\Ebb [\log (W_{T}
^{\rho ^{\ast },x})]$, $x>0$.
\end{theorem}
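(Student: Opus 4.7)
The plan is to exploit the explicit log-decomposition already furnished by Proposition \ref{prop:prop.weal.ju}, reducing the stochastic optimization problem to a pointwise deterministic minimization at each $(t,\omega)$. Since the first assertion of the theorem (the wealth-process identity and integrability of $\log W^{\rho,x}_T$) has been established in Proposition \ref{prop:prop.weal.ju}, only the value-function formula and the existence of the optimal strategy remain.

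The starting point is to take expectations in the decomposition \eqref{eq:smart.dec.wea.ju}. Since the local martingale part of $\log(W^{\rho,x})$ is a true martingale by Proposition \ref{prop:prop.weal.ju}, this yields
\begin{equation*}
\Ebb\bigl[\log W^{\rho,x}_T\bigr] = \log x + \Ebb\!\left[\int_0^T G(s,\omega,\rho_s(\omega))\,\rmd s\right],
\end{equation*}
where $G(t,\omega,c)$ collects all drift terms of \eqref{eq:smart.dec.wea.ju} after substituting $c$ for $\rho_s$. The crucial observation is that $\log(1+y)\le y$ for $y\ge 0$ and $\beta \ge 0$ by assumption, so the Poisson drift term $\int_\Rbb(c\beta - \log(1+c\beta))\nu(\rmd y)$ is \emph{non-negative}. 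Consequently $-G(t,\omega,c)$ is, up to the constant $\theta_t^2/(2\sigma^2)$, exactly the expression being minimized in the definition \eqref{eq:ass.gen.ju} of $f$, and therefore $G(t,\omega,c) \le f(t,\omega)$ pointwise for every $c\in C$. This immediately gives the upper bound $V(x)\le \log x + \Ebb[\int_0^T f(s)\,\rmd s]$ for every $\rho\in\Ascr$.

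To obtain equality I would apply the measurable-selection Lemma \ref{lem:fil.im.fct} to the integrand
\begin{equation*}
H(t,\omega,c) := \tfrac{\sigma^2}{2}\bigl(c\zeta_t(\omega)-\theta_t(\omega)/\sigma^2\bigr)^2 + \int_\Rbb\bigl(c\beta(t,\omega,y) - \log(1+c\beta(t,\omega,y))\bigr)\nu(\rmd y).
\end{equation*}
Hypothesis (i) of the lemma (continuity in $c$) follows from dominated convergence, using the estimate $|c\beta - \log(1+c\beta)|\le c^2\beta^2 \le c^2\alpha_t^2(y^2\wedge 1)$ from \eqref{eq:int.cond.log.com} together with $\nu$-integrability of $y^2\wedge 1$. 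Hypothesis (ii) (predictability for fixed $c$) is immediate from the predictability of $\zeta,\theta,\beta$ and Fubini. Hypothesis (iii) (attainment of the minimum) follows from coercivity: since $\zeta_t^2\ge \varepsilon_2>0$, the Brownian term dominates as $c\to\infty$ in $C$, and the Poisson term is non-negative. The lemma then yields a predictable $\rho^*$ with $H(t,\omega,\rho^*_t(\omega)) = \min_{c\in C} H(t,\omega,c)$.

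The main obstacle is verifying $\rho^*\in\Ascr$, i.e., checking the square-integrability condition of Definition \ref{def:adm.st.ju.co}. For this I would argue as follows: fix any $c_0\in C$ and observe that $H(t,\omega,\rho^*_t) \le H(t,\omega,c_0)$, the right-hand side being uniformly bounded in $(t,\omega)$ by a constant $K_0$ (from boundedness of $\zeta,\theta,\alpha$ and $\nu$-integrability of $y^2\wedge 1$). Since the $\nu$-integral in $H$ is non-negative, this forces $\tfrac{\sigma^2}{2}(\rho^*_t\zeta_t-\theta_t/\sigma^2)^2 \le K_0$, and combining this with $\zeta_t^2\ge \varepsilon_2$ and boundedness of $\theta$ yields a deterministic uniform bound on $\rho^*_t(\omega)$. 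Hence $\rho^*$ is bounded, predictable, and $C$-valued, so $\rho^*\in\Ascr$. Substituting $\rho^*$ back into the decomposition gives equality in the upper bound, proving both the formula $V(x) = \log x + \Ebb[\int_0^T f(s)\,\rmd s]$ and the optimality of $\rho^*$.
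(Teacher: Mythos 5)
Your proposal is correct, and its core coincides with the paper's own proof: the paper likewise reduces everything to the pointwise minimization of the same function $G(t,\omega,c)$ (your $H$), proves continuity in $c$ by dominated convergence with the same domination $c^2\alpha_t^2(y^2\wedge 1)$, obtains attainment from coercivity of the Brownian square term using $\zeta_t^2\ge\varepsilon_2$ (compactness of the level set $C_0$), invokes Lemma \ref{lem:fil.im.fct} for a predictable minimizer, and establishes $\rho^{\ast}\in\Ascr$ by exactly your comparison-with-a-fixed-$c_0$ device (your write-up of this admissibility step is in fact cleaner than the paper's, which detours through the estimate $|\rho^{\ast}_t|\le\overline{k}_1 G(t,\omega,c)+\overline{k}_2$). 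The genuine difference is the endgame: the paper, faithful to the dynamical method it announces, constructs the martingale-optimality-principle family of Assumption \ref{ass:mar.opt.pr}, namely $R^{\rho,x}_t=\log(W^{\rho,x}_t)+Y_t$ with $Y_t=\Ebb[\int_t^T f(s)\,\rmd s\mid\Fscr_t^L]$ as in \eqref{eq:proc.Y}, verifies via \eqref{eq:expr.Rrho} that $R^{\rho,x}$ is a supermartingale for every $\rho\in\Ascr$ and a martingale for $\rho^{\ast}$, and reads off $V(x)=\log x+Y_0$; you instead take expectations at the terminal time directly in \eqref{eq:smart.dec.wea.ju} and bound the drift integrand pointwise by $f$, with equality along $\rho^{\ast}$. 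Your static shortcut is fully justified: Proposition \ref{prop:prop.weal.ju} supplies both the true-martingale property of the local martingale part and the integrability of the drift, and your $K_0$-bound makes $f$ bounded, so the expectation/Fubini step is legitimate. What the paper's longer route buys is the dynamic value process $Y$ itself, which Remark \ref{rem:BSDEs} then identifies with the first component of the solution of the BSDE \eqref{eq:ass.bsde.ju}, tying the optimization back to Theorem \ref{thm:ex.un}; your argument, while shorter, does not produce this byproduct (nor the verification that $f$ is an admissible generator, which the paper carries out for that purpose). One incidental remark: as printed, \eqref{eq:smart.dec.wea.ju} carries a sign slip in the $\theta_s^2/(2\sigma^2)$ term relative to the derivation \eqref{eq:rep.X.aux} and to the definition \eqref{eq:ass.gen.ju} of $f$; your reconstruction implicitly uses the correct sign, so your identity $G(t,\omega,c)=-H(t,\omega,c)+\theta_t^2(\omega)/(2\sigma^2)\le f(t,\omega)$, with equality at $c=\rho_t^{\ast}(\omega)$, is the right one.
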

\begin{proof}
We only need to verify the statements about the optimization problem
\eqref{eq:opt.log.ct}, since the properties of the wealth process\index{wealth process}
$W^{\rho ,x}$ come from Proposition \ref{prop:prop.weal.ju}. We prove
the result in two steps. First, using Lemma \ref{lem:fil.im.fct}, we
show that $({t,\omega })\mapsto f({t,\omega })$ in
\eqref{eq:ass.gen.ju} is an admissible generator and that there exists
a $\rho ^{\ast }\in \Ascr $ satisfying \eqref{eq:op.str.ju}. We then show
the optimality of the strategy $\rho ^{\ast }\in \Ascr $ as an
application of the martingale optimality principle.\index{martingale optimality principle}

For each $c$ in the closed subset $C\subseteq [0,+\infty )$, we define
\begin{equation*}
\begin{split}
G({t,\omega },c):=\frac{\sigma ^{2}}{2}\big (c\zeta _{t}(\omega )-
&
\sigma ^{-2}\theta _{t}(\omega )\big )^{2}
\\
&+\int _\Rbb \big \{c\beta ({t,\omega },y)-\log (1+c\beta ({t,\omega
},y))\big \}\nu (\rmd y)\,.
\end{split}
\end{equation*}
Since $\beta $ is a predictable function,\index{predictable function} the process $({t,\omega })
\mapsto G({t,\omega },c)$ is predictable, for every $c\in C$. We now
show the continuity of ${c}\mapsto G({t,\omega },c)$ on $C$ for
$({t,\omega })$ in ${[0,T]\times \Omega }$. Let $(c^{n})_{n\in \Nbb }
\subseteq C$ be a convergent sequence and let $c\in C$ be its limit. We
have
\begin{align*}
&c^{n}\beta ({t,\omega },y)-\log (1+c^{n}\beta ({t,\omega },y))\\
&\quad \longrightarrow
c\beta ({t,\omega },y)-\log (1+c\beta ({t,\omega },y)),\quad n\rightarrow
+\infty \,,
\end{align*}
pointwise in $t$, $\omega $ and $y$. From \eqref{eq:int.cond.log.com},
we get, as $n\rightarrow +\infty $,
\begin{equation*}
0\leq c^{n}\beta ({t,\omega },y)-\log (1+c^{n}\beta ({t,\omega },y))
\leq (c^{n}\alpha _{t}(\omega ))^{2}(y^{2}\wedge 1)\longrightarrow c
\alpha _{t}^{2}(\omega )(y^{2}\wedge 1)\,.
\end{equation*}
By dominated convergence, we get $G({t,\omega },c^{n})\longrightarrow
G({t,\omega },c)$, $n\rightarrow +\infty $, which is the statement about
the continuity. We now show that there exists a $c^{\ast }$ in $ C$ such
that $G({t,\omega },c^{\ast })=\min _{c\in C}G({t,\omega },c)$, for every
$({t,\omega })$ in ${[0,T]\times \Omega }$. By the estimate
$G({t,\omega },c)\geq \frac{\sigma ^{2}}{2}\big (c\zeta _{t}(\omega )-
\sigma ^{-2}\theta _{t}(\omega )\big )^{2}$, we get $G({t,\omega },c)
\longrightarrow +\infty $ as $c\rightarrow \infty $, for every
$({t,\omega })$ in ${[0,T]\times \Omega }$. Hence,
$C_{0}:=\{c\in C: G({t,\omega },c)\leq G({t,\omega },c_{0})\}$ is a closed bounded set and, consequently, compact, where $c_{0}\in C$ is chosen
arbitrarily but fixed. By the continuity of $G({t,\omega },\cdot )$,
there exists $c^{\ast }{=c^{\ast }(t,\omega )}$ in $C_{0}$ such that
$G({t,\omega },c^{\ast })=\min _{c\in C_{0}}G({t,\omega },c)$ holds for
every $(t,\omega )$ in $[0,T]\times \Omega $. We also have $G({t,
\omega },c^{\ast })=\inf _{c\in C}G({t,\omega },c)$ for every
$({t,\omega })\in {[0,T]\times \Omega }$. So, by Lemma
\ref{lem:fil.im.fct}, we get that $({t,\omega })\mapsto \min _{c\in C}G(
{t,\omega },c)$ is a predictable process\index{predictable process} and that there exists a
$C$-valued predictable $\rho ^{\ast }$ such that, for every
$(t,\omega )$ in $[0,T]\times \Omega $, the identity $G({t,\omega },\rho ^{\ast }
_{t}(\omega ))=\min _{c\in C}G({t,\omega },c)$ holds.

We now show that
$\Ebb [\int _{0}^{T}(\rho _{s}^{\ast })^{2}\rmd s]<+\infty $. By $0<\beta ({t,\omega ,y})\leq \alpha
_{t}(\omega )(|y|\wedge 1)$ and
\eqref{eq:int.cond.log.com}, for $c\in C$, we can estimate
\begin{equation*}
\label{eq:est.ju.G}
\begin{split}
\int _\Rbb \big (c\beta (t,y)-\log \big (1+c\beta (t,y)\big )\big )
\nu (\rmd y)\leq \left (\int _\Rbb (y^{2}\wedge 1)\nu (\rmd y)\right )
\alpha ^{2}_{t}c^{2}<+\infty \,.
\end{split}
\end{equation*}
So, $\alpha $ being bounded, we get $0\leq G({t,\omega },c)\leq k_{1}c
^{2}+k_{2}$, where $k_{1},k_{2}>0$ denote two suitable constants. Using
the boundedness of $\zeta $ and $\theta $, the minimality property of
$\rho ^{\ast }$ and the estimate \eqref{eq:est.ju.G}, it is therefore
straightforward to see that, for two suitable constants $\overline{k}
_{1},\overline{k}_{2}>0$, we have $|\rho _{t}^{\ast }(\omega )|\leq
\overline{k}_{1}G({t,\omega },c)+\overline{k}_{2}$ for every
$c\in C$. Hence, we get $ (\rho _{t}^{\ast }(\omega ))^{2}\leq
\tilde{k}_{1} c^{2}+\tilde{k}_{2} $, $c\in C$, where $\tilde{k}_{1},
\tilde{k}_{2}>0$ are suitable constants. This implies $\rho ^{\ast }
\in \Ascr $. We now verify that $f$ in \eqref{eq:ass.gen.ju} satisfies
Assumption \ref{ass:gen} (i) and (ii). Because of the previous step and
the predictability of $\theta $, from the identity
\begin{equation*}
f({t,\omega })=-\min _{c\in C}G({t,\omega },c)+\frac{\theta ^{2}_{t}(
\omega )}{2\sigma ^{2}},\quad t\in [0,T],
\end{equation*}
we deduce that $({t,\omega })\mapsto f({t,\omega })$ is predictable.
This shows that $f$ fulfils Assumption \ref{ass:gen} (i). The estimate
\begin{equation*}
|f({t,\omega })|\leq G({t,\omega },c)+\frac{\theta _{t}^{2}(\omega )}{2
\sigma ^{2}}\leq k_{1}c^{2}+k,\quad c\in C,
\end{equation*}
where $k>0$ is a suitable constant, implies that $f$ satisfies
Assumption \ref{ass:gen} (ii).

We now construct a family of processes $\{R^{\rho ,x},\ \rho \in \Ascr
\}$ which satisfies Assumption \ref{ass:mar.opt.pr}.

{Notice that, because $f$ satisfies Assumptions \ref{ass:mar.opt.pr} (i)
and (ii), the process $\int _{0}^{\cdot }f(s)\rmd s$ is
$\Fbb ^{L}$-adapted, $f$ being predictable and Lebesgue integrable.
Hence, we can consider the square integrable martingale $N$ satisfying
$N_{t}=\Ebb [\int _{0}^{T}f(s)\rmd s|\Fscr _{t}^{L}]$ a.s., $t\in [0,T]$.
We define the \cadlag \ semimartingale $Y=(Y_{t})_{t\in [0,T]}$ by
setting
%
\begin{equation}
\label{eq:proc.Y}
Y_{t}:=N_{t}-\int _{0}^{t} f(s)\rmd s,\quad t\in [0,T].
\end{equation}
We observe that, $\Fscr _{0}^{L}$ being trivial, we have $Y_{0}=N_{0}=\Ebb [
\int _{0}^{T}f(s)\rmd s]$. Furthermore, $Y$ satisfies $Y_{t}=\Ebb [\int
_{t}^{T}f(s)\rmd s|\Fscr _{t}^{L}]$ a.s., $t\in [0,T]$ and $Y_{T}=0$.

We now set $R^{\rho ,x}_{t}:=\log (W^{\rho ,x}_{t})+Y_{t}$ for
$t\in [0,T]$. Notice that $R^{\rho ,x}$ fulfils Assumption
\ref{ass:mar.opt.pr} (1), since $R^{\rho ,x}_{T}:=\log (W^{\rho ,x}
_{T})$ holds. From Proposition \ref{prop:prop.weal.ju}, for every
$t$ in $[0,T]$, we get
%
\begin{align}
\label{eq:expr.Rrho}
R^{\rho ,x}_{t}
&=\log (x)+N_{t}+\int _{0}^{t}\rho _{s}\zeta _{s}\rmd
{B}^\sig  _{s}
{+}\int _{[0,t]\times \Rbb }\log \big (1+\rho _{s}
\beta (s,y)\big )\overline{\mu }(\rmd s,\rmd y)\nonumber
\\
&-\int _{0}^{t}\Big \{f(s)+\frac{\sigma ^{2}}{2}\Big (\rho _{s}\zeta _{s}-\frac{
\theta _{s}}{\sigma ^{2}}\Big )^{2}\nonumber
\\
&
\hspace{1cm}+\int _{\Rbb }
\big (\rho _{s}\beta (s,y)-\log (1+\rho _{s}
\beta (s,y))\big )\nu (\rmd y)-\frac{\theta _{s}^{2}}{2\sigma ^{2}}
\Big \}\rmd s\,{.}
\end{align}
The first line on the right-hand side of \eqref{eq:expr.Rrho} consists
of true martingales. Because the drift part on the right-hand side of
\eqref{eq:expr.Rrho} is non-positive and integrable, $R^{\rho ,x}$ is
a supermartingale for every $\rho \in \Ascr $. Additionally,
$R^{\rho ,x}_{0}$ does not depend on $\rho $. Furthermore, if
$\rho ^{\ast }$ is the admissible strategy\index{admissible strategies} introduced above, then
$\rho ^{\ast }$ satisfies \eqref{eq:op.str.ju} and $R^{\rho ^{\ast },x}$
is a true martingale. The martingale optimality principle\index{martingale optimality principle} implies the
optimality of $\rho ^{\ast }$. Hence, $V(x)=\Ebb [\log (W_{T}^{\rho
^{\ast },x})]=\log (x)+Y_{0}$ and the proof of the theorem is
complete.}
\end{proof}
%

%
\begin{remark}
\label{rem:BSDEs}
It is evident from the first part of the proof of Theorem
\ref{thm:log.ut.max.con.ju}, that the predictable function $f$ defined
in \eqref{eq:ass.gen.ju} is an admissible generator. So, because of
Theorem \ref{thm:ex.un}, the BSDE
%
\begin{equation}
\label{eq:ass.bsde.ju}
\widetilde{Y}_{t}=0+\int _{t}^{T} f(s)\rmd s-\int _{t}^{T}Z_{s}\rmd
{B}^\sig  _{s}-\sum _{n=1}^{\infty }\int _{t}^{T} V^{n}_{s}\rmd X^{f
_{n}}_{s}
\end{equation}
has a unique solution $(\widetilde{Y},Z,V)\in \Sscr ^{2}\times \Lrm
^{2}({B}^\sig  )\times M^{2}(\ell ^{2})$, where $(Z,V)$ is the unique
pair such that for every $t\in [0,T]$
\begin{equation*}
\begin{split}
N_{t}
&=\Ebb \left [\int _{0}^{T}f(s)\rmd s\Big |\Fscr _{t}^{L}\right ]
\\
&=\Ebb \left [\int _{0}^{T}f(s)\rmd s\right ]+\int _{0}^{t}Z_{s}\rmd
{B}^\sig  _{s}+\sum _{n=1}^{\infty }\int _{0}^{t}V_{s}^{n}\rmd X^{f
_{n}}_{s},
\end{split}
\end{equation*}
holds and $\widetilde{Y}_{t}=\Ebb [\int _{t}^{T}f(s)\rmd s|\Fscr _{t}
^{L}]$. Clearly, $\widetilde{Y}$ satisfies $
\widetilde{Y}_{t}=N_{t}-\int _{0}^{t}f(s)\rmd s$, for every $t$ in $[0,T]$, and
$\widetilde{Y}_{0}=\Ebb [\int _{0}^{T}f(s)\rmd s]$. Hence, $
\widetilde{Y}=Y$, where $Y$ has been defined in \eqref{eq:proc.Y}. This
shows that the martingale optimality principle\index{martingale optimality principle} in Theorem
\ref{thm:log.ut.max.con.ju} can be also constructed as an application
of Theorem \ref{thm:ex.un}.
\end{remark}
%



\begin{funding}
PDT gratefully acknowledges Martin Keller-Ressel and funding from the \gsponsor[id=GS1,sponsor-id=501100001659]{German Research Foundation} (DFG) under grant \gnumber[refid=GS1]{ZUK 64}.
\end{funding}

\end{document}